   \theoremstyle{plain}
   \newtheorem{theorem}{Theorem}
   \newtheorem{proposition}[theorem]{Proposition}
   \newtheorem{lemma}[theorem]{Lemma}
   \newtheorem{corollary}[theorem]{Corollary}
   \theoremstyle{definition}
   \newtheorem{definition}[theorem]{Definition}
   \theoremstyle{remark}
   \newtheorem{remark}[theorem]{Remark}
 \newcommand{\nc}{\newcommand}
 \nc{\la}{\lambda}
 \nc{\La}{\Lambda}
 \nc{\ot}{\otimes}
 \nc{\otb}{\bigotimes}
 \nc{\pwl}{P^+}
 \nc{\ftil}{\tilde{f}}
 \nc{\etil}{\tilde{e}}
 \nc{\veps}{\varepsilon}
 \nc{\vphi}{\varphi}
 \nc{\ol}{\overline}
 \nc{\CB}{\mathsf{B}}
 \nc{\Bla}{\CB(\lambda)}
 \nc{\Binf}{\CB(\infty)}
 \nc{\CT}{\mathsf{T}}
 \nc{\Tla}{\CT(\lambda)}
 \nc{\Tinf}{\CT(\infty)}
 \nc{\BaCr}{\mathsf{C}}
 \nc{\bce}{c}
 \nc{\BaCrCirc}{\mathsf{C}^\circ}
 \nc{\BaCrBar}{\bar{\mathsf{C}}}
 \nc{\CupCirc}{\mathsf{C}_\sqcup^\circ}
 \nc{\CupBar}{\bar{\mathsf{C}}_\sqcup}
 \nc{\KaEm}{\mathsf{K}}
 \nc{\Kinf}{\mathsf{K}(\infty)}
 \DeclareMathOperator{\wt}{wt}
 \nc{\bxr}[2]{\boxed{#1}_{\;#2}}
 \nc{\nbxr}[2]{\#\,\boxed{#1}_{\;#2}}
 \nc{\relCE}[1]{\mathsf{E}{[#1]}}
 \nc{\relCA}[1]{\mathsf{A}{[#1]}}
 \nc{\relCB}[1]{\mathsf{B}{[#1]}}
 \nc{\relCC}[1]{\mathsf{C}{[#1]}}
 \nc{\relCD}[1]{\mathsf{D}{[#1]}}
 \nc{\relBA}[1]{\mathsf{A}{[#1]}}
 \nc{\relBB}[1]{\mathsf{B}{[#1]}}
 \nc{\relBC}[1]{\mathsf{C}{[#1]}}
 \nc{\relBD}[1]{\mathsf{D}{[#1]}}
 \nc{\relDA}[1]{\mathsf{A}{[#1]}}
 \nc{\relDB}[1]{\mathsf{B}{[#1]}}
 \nc{\relDC}[1]{\mathsf{C}{[#1]}}
 \nc{\relDD}[1]{\mathsf{D}{[#1]}}
 \nc{\relDE}[1]{\mathsf{E}{[#1]}}
 \nc{\relDF}[1]{\mathsf{F}{[#1]}}
 \nc{\relDH}[1]{\mathsf{E}{[#1]}}
 \nc{\relDI}[1]{\mathsf{F}{[#1]}}
 \nc{\comesfrom}{\leftsquigarrow}
 \nc{\boxcnt}[2]{\makebox[2.2em][l]{$t_{#1,#2}$}}
\begin{document}

\title{%
Description of~$\mathsf{B}(\infty)$ through Kashiwara Embedding\\ for $E_6$ and $E_7$ Lie Algebra Types} 

\author[1]{Jin Hong%
}
\author[2]{Hyeonmi Lee%
}
\affil[1]{\normalsize%
  Department of Mathematical Sciences and Research Institute of Mathematics\authorcr
  Seoul National University, Seoul 08826, Korea}
\affil[2]{\normalsize%
  Department of Mathematics and Research Institute for Natural Sciences\authorcr
  Hanyang University, Seoul 133, Korea}

\pagestyle{myheadings}
\date{}

\maketitle

\begin{abstract}
We study the crystal base $\Binf$ associated with the negative part of the quantum group for finite simple Lie algebras of types $E_6$ and~$E_7$.
We present an explicit description of~$\Binf$ as the image of a Kashiwara embedding that is in natural correspondence with the tableau description of~$\Binf$.

\smallskip
\noindent\textbf{Keywords}:
 crystal base,
 exceptional simple Lie algebra,
 marginally large tableau,
 Kashiwara embedding
\end{abstract}


\section{Introduction}\label{sec:introduction}

The quantum group $U_q(\mathfrak{g})$ is a $q$-deformation of the universal enveloping algebra~$U(\mathfrak{g})$ for a Lie algebra~$\mathfrak{g}$ and the crystal base~$\Binf$ presents the bare skeleton structure of its negative part $U_q^-(\mathfrak{g})$.
The crystal~$\Binf$ has received attention since the very birth of crystal base theory~\cite{Ka91,Luszti90} as an integral part of the grand loop argument proving the existence of crystal bases and substantial efforts have been made to give explicit descriptions of the crystal~$\Binf$.
A variety of tools were used for this purpose and in particular, the works~\cite{Cliff98,Ho2005,Ka93,NaZe} use Kashiwara embedding.
This work presents new explicit descriptions of~$\Binf$ through the Kashiwara embedding for the $E_6$ and $E_7$ types that are in natural correspondence with the tableau description of~$\Binf$.

When the irreducible highest weight crystals~$\Bla$ corresponding to certain dominant integral weights~$\la$ are gathered together and some careful identifications are made among elements belonging to different crystals, one can give the resulting set of equivalence classes $\bigsqcup_\la \Bla/\negthickspace\sim$ a crystal structure induced from those of the~$\Bla$'s.
The resulting structure is known to be isomorphic to~$\Binf$ as a crystal, and this description of~$\Binf$ is valid for all symmetrizable Kac-Moody algebras~\cite{Ka02}.

The understanding of~$\Binf$ as~$\bigsqcup_\la \Bla/\negthickspace\sim$ can be used to create a more concrete realization of~$\Binf$.
The work~\cite{HonLee08} gathered together the Young tableau realizations~\cite{KasNak94,KanMis94} of the highest weight crystals~$\Bla$, available for the classical and $G_2$ finite simple Lie algebra types, and then made the appropriate identifications of crystal elements.
This resulted in a concrete realization of~$\Binf$, when the marginally large tableaux were chosen to represent the equivalence classes.
The Young tableau results analogous to~\cite{KasNak94,KanMis94} do not yet exist for the $E_6$, $E_7$, $E_8$, and~$F_4$ types, but the marginally large tableau realizations of~$\Binf$ for these types could still be obtained by~\cite{HonLee12} after just partially constructing the analogues.

There exists a strict crystal embedding~\cite{Ka93}
\begin{equation}
\Binf \hookrightarrow
\Binf\ot\CB_{i_m}\ot\cdots\ot\CB_{i_2}\ot\CB_{i_1},
\end{equation}
usually referred to as the Kashiwara embedding, for any sequence $\iota=(i_m,\dots,i_2,i_1)$ of elements from~$I$, the set of simple root indices.
Here, the crystals $\CB_i=\{\,b_i(k)\mid k\in\mathbf{Z}\,\}$, appearing on the right-hand side, are defined for each $i\in I$ and are extremely simple not only as sets, but also in terms of the Kashiwara operator actions.
For each of the classical finite simple Lie algebra types, the work~\cite{Cliff98} fixed an explicit choice of sequence~$\iota$ and described the image set of the corresponding Kashiwara embedding.
Because the image consists only of elements of the form
\begin{equation}
b_\infty\ot b_{i_m}(k_m)\ot\cdots\ot b_{i_1}(k_1),
\end{equation}
where $b_\infty$ is the highest weight element of~$\Binf$, this can be accepted as a simple realization of~$\Binf$.
The essence of describing the image set is in expressing the possible range of integers $(k_j)_{j=1}^m$, and the notion of large tableaux for the classical types, which the work introduced, was an important tool in this description.

The current paper provides results analogous to the descriptions of~$\Binf$ given by~\cite{Cliff98} for the $E_6$ and $E_7$ types.
The choice of sequences $\iota$ we use for the two types are such that the image set of the Kashiwara embedding is in natural correspondence with the tableau description of~$\Binf$ given by~\cite{HonLee12}.

Let us mention two existing results that give explicit descriptions of~$\Binf$, for the exceptional Lie algebra types.
The first is the work~\cite{Li1998} giving a description of $\Binf$ based on the generalized Gelfand-Tsetlin patterns.
The polyhedral realization~\cite{Ho2005} also gives a description of~$\Binf$. Their realizations for the~$E_6$ and~$E_7$ types correspond to sequences $\iota$ that are different from the ones used in this work.

The rest of this paper is organized as follows. 
In the next section, we recall some previous results and fix the notation to be used in the rest of the paper.
In Section~\ref{sec:KE.E6}, we develop our results for the $E_6$ type, and the $E_7$ type is discussed in Section~\ref{sec:KE.E7}.
The basic crystals that are used as box entries in our tableaux are explicitly given in the appendices.

\bigskip
\noindent\textbf{Acknowledgments}\quad
Both authors would like to thank the Department of Mathematics at U.C. Davis, where part of this work was performed, for its hospitality.
J.~Hong was supported by the Basic Science Research Program through the National Research Foundation of Korea (NRF) funded by the Ministry of Science, ICT \& Future Planning (NRF-2012R1A1B4003379).
H.~Lee was supported by the Basic Science Research Program through the National Research Foundation of Korea (NRF) funded by the Ministry of Education, Science and Technology (NRF-2012R1A1A2008392).

\section{Preliminaries}\label{sec:not}

In this section, we will recall some basic crystal base theory and the marginally large tableau description of crystal~$\Binf$.
We will not recall all the standard notation used with crystal base theory, but any non-standard ones to be used in this paper will be explained here.

The indexing schemes used in this paper for the $E_6$ and $E_7$ type root systems are as follows.
\begin{center}
\raisebox{11.5pt}{$E_6$:}\quad
\begin{tikzpicture}[scale=0.78, thick]
\draw (-2, 0) circle (2pt);
\draw (-1, 0) circle (2pt);
\draw ( 0, 0) circle (2pt);
\draw ( 1, 0) circle (2pt);
\draw ( 2, 0) circle (2pt);
\draw ( 0, 0.95) circle (2pt);
\draw (-1.93, 0) -- (-1.07, 0);
\draw (-0.93, 0) -- (-0.07, 0);
\draw ( 0, 0.07) -- ( 0, 0.88);
\draw ( 0.07, 0) -- ( 0.93, 0);
\draw ( 1.07, 0) -- ( 1.93, 0);
\draw (-1.77,-0.32) node {${}_1$};
\draw (-0.77,-0.32) node {${}_2$};
\draw ( 0.23,-0.32) node {${}_3$};
\draw ( 1.23,-0.32) node {${}_4$};
\draw ( 2.23,-0.32) node {${}_5$};
\draw ( 0.23, 0.65) node {${}_6$};
\end{tikzpicture}
\hspace{30pt}
\raisebox{11.5pt}{$E_7$:}\quad
\begin{tikzpicture}[scale=0.78, thick]
\draw (-3, 0) circle (2pt);
\draw (-2, 0) circle (2pt);
\draw (-1, 0) circle (2pt);
\draw ( 0, 0) circle (2pt);
\draw ( 1, 0) circle (2pt);
\draw ( 2, 0) circle (2pt);
\draw ( 0, 0.95) circle (2pt);
\draw (-2.93, 0) -- (-2.07, 0);
\draw (-1.93, 0) -- (-1.07, 0);
\draw (-0.93, 0) -- (-0.07, 0);
\draw ( 0, 0.07) -- ( 0, 0.88);
\draw ( 0.07, 0) -- ( 0.93, 0);
\draw ( 1.07, 0) -- ( 1.93, 0);
\draw (-2.77,-0.32) node {${}_7$};
\draw (-1.77,-0.32) node {${}_1$};
\draw (-0.77,-0.32) node {${}_2$};
\draw ( 0.23,-0.32) node {${}_3$};
\draw ( 1.23,-0.32) node {${}_4$};
\draw ( 2.23,-0.32) node {${}_5$};
\draw ( 0.23, 0.65) node {${}_6$};
\end{tikzpicture}
\end{center}
Our later arguments will rely on the following version of the tensor product rule that is suitable for applications to the tensor product of more than two crystals.
\begin{proposition}[\hspace{1sp}\cite{Ka93}]\label{prop:mtpr}
Let $B_k$ $(1\leq k\leq m)$ be crystals with $b_k\in B_k$, and let us set
\begin{equation*}
a_k = \veps_i(b_k)-\sum_{1\leq v<k} \langle h_i,\wt(b_v)\rangle.
\end{equation*}
Then we have the following.
\begin{enumerate}
\item
 $\etil_i(b_1\ot\cdots\ot b_m)
 = b_1\ot\cdots\ot b_{k-1}\ot\etil_i b_k\ot b_{k+1}\ot\cdots\ot b_m$,
 when $a_k>a_v$ for $1\leq v<k$ and $a_k\geq a_v$ for $k<v\leq m$.
\item
 $\ftil_i(b_1\ot\cdots\ot b_m)
 = b_1\ot\cdots\ot b_{k-1}\ot\ftil_i b_k\ot b_{k+1}\ot\cdots\ot b_m$,
 when $a_k\ge a_v$ for $1\leq v<k$ and $a_k>a_v$ for $k<v\leq m$.
\end{enumerate}
\end{proposition}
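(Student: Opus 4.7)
The plan is to induct on $m$, reducing to the standard two-factor tensor product rule for Kashiwara operators. For $m=1$ both conditions on $a_k$ are vacuous and $\etil_i(b_1) = \etil_i b_1$ is trivially covered. For $m=2$, I would substitute $a_1 = \veps_i(b_1)$ and $a_2 = \veps_i(b_2) - \langle h_i, \wt(b_1)\rangle$, invoke $\langle h_i, \wt(b_1)\rangle = \vphi_i(b_1) - \veps_i(b_1)$, and verify that $a_1 \geq a_2$ is equivalent to $\vphi_i(b_1) \geq \veps_i(b_2)$ while $a_2 > a_1$ is equivalent to $\vphi_i(b_1) < \veps_i(b_2)$. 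These are exactly the case conditions of the standard two-factor rule for $\etil_i$, and the analogous equivalence for $\ftil_i$ follows by swapping strict and non-strict inequalities.

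For the inductive step I would write $b_1 \ot \cdots \ot b_m = B' \ot b_m$ with $B' = b_1 \ot \cdots \ot b_{m-1}$ and apply the two-factor rule across the rightmost cut. The key auxiliary identity is
\begin{equation*}
\veps_i(B') = \max\!\Bigl(0,\;\max_{1 \leq v \leq m-1} a_v\Bigr),
\end{equation*}
to be established by a short separate induction using the same two-factor rule. Combined with $\vphi_i(B') = \veps_i(B') + \sum_{v<m}\langle h_i,\wt(b_v)\rangle$, this turns the dichotomy $\vphi_i(B') \geq \veps_i(b_m)$ versus $<$ into $\max_{v \leq m-1} a_v \geq a_m$ versus $<$. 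In the first case the induction hypothesis applied to $B'$ identifies the $\etil_i$-active factor as the smallest $k \leq m-1$ at which $a_k$ is maximal, and this $k$ is also the smallest such index over $1 \leq v \leq m$ because $a_k \geq a_m$; in the second case $b_m$ itself is active and $a_m > a_v$ for $v < m$ is immediate. Part (2) for $\ftil_i$ is symmetric, with the largest rather than the smallest maximizer of the $a_v$'s marking the active factor.

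The main obstacle I anticipate is the bookkeeping around the $\max(0,\cdot)$ term: when every $a_v$ with $v \leq m-1$ is negative the identity collapses to $\veps_i(B') = 0$, and one has to verify directly that in this degenerate range the comparison with $a_m$ still reproduces the correct case of the two-factor rule. A conceptually cleaner alternative that I might present instead to sidestep these cases is the signature-walk picture: reading the concatenated $i$-signature $(-)^{\veps_i(b_1)}(+)^{\vphi_i(b_1)}\cdots(-)^{\veps_i(b_m)}(+)^{\vphi_i(b_m)}$ as a lattice walk with $-\mapsto +1$ and $+\mapsto -1$ makes $a_v$ the height at the top of the $v$-th ascending run, and the standard signature description of Kashiwara operators places $\etil_i$ (resp.\ $\ftil_i$) on the factor whose ascending run first (resp.\ last) reaches the global maximum of the walk, which is exactly the condition on $a_k$ stated in the proposition.
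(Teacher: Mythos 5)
The paper does not actually prove this proposition --- it is quoted from Kashiwara's work [Ka93] --- so there is no in-paper argument to compare against. Your overall strategy (induction on $m$ via the two-factor tensor product rule, with the base case checked by rewriting $\vphi_i(b_1)\ge\veps_i(b_2)$ as $a_1\ge a_2$ using $\langle h_i,\wt(b_1)\rangle=\vphi_i(b_1)-\veps_i(b_1)$) is the standard and correct route, and your identification of the active factor as the first maximizer of the $a_v$ for $\etil_i$ and the last for $\ftil_i$ is handled correctly in both the base case and the inductive step.

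There is, however, one concrete error, and it is not the harmless bookkeeping issue you anticipate. The auxiliary identity should read $\veps_i(b_1\ot\cdots\ot b_{m-1})=\max_{1\le v\le m-1}a_v$ with \emph{no} truncation at $0$: it follows by iterating Kashiwara's definitional formula $\veps_i(b\ot b')=\max\bigl(\veps_i(b),\,\veps_i(b')-\langle h_i,\wt(b)\rangle\bigr)$, which is part of the definition of the tensor product of abstract crystals rather than a consequence of the operator rule. Your version with $\max(0,\cdot)$ is valid only for seminormal crystals, where $\veps_i(b)=\max\{n:\etil_i^n b\neq 0\}\ge 0$. But the proposition is applied in this paper to tensor products involving the abstract crystals $\CB_i$, for which $\veps_i(b_i(k))=-k$ can be negative and $\veps_i(b_j(k))=-\infty$ for $j\neq i$; having all $a_v$ negative is therefore not a degenerate corner case but the generic situation, and in that range your identity is simply false, so the ``direct verification'' you propose cannot succeed. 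The same seminormality restriction afflicts the signature-walk alternative, whose exponents $\veps_i(b_k)$ and $\vphi_i(b_k)$ must be non-negative integers to serve as run lengths. Dropping the $\max(0,\cdot)$ repairs the induction with no further changes: adding $\sum_{v<m}\langle h_i,\wt(b_v)\rangle$ to both sides turns the dichotomy $\vphi_i(B')\ge\veps_i(b_m)$ versus $<$ into exactly $\max_{v<m}a_v\ge a_m$ versus $<$, as you intended.
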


The abstract crystal $\CB_i = \{b_i(k) \mid k\in\mathbf{Z}\}$ was introduced in~\cite{Ka93} for all Kac-Moody Lie algebra types and each simple root index $i\in I$.
Its crystal structure is as follows.
\begin{alignat*}{3}
\wt(b_i(k))     &=k\alpha_i,\\
\vphi_i(b_i(k)) &=k,         & \veps_i(b_i(k))&=-k,\\
\vphi_i(b_j(k)) &=-\infty,   & \veps_i(b_j(k))&=-\infty,
& \quad&\text{for $j\neq i$},\\
\ftil_i(b_i(k))    &=b_i(k-1),  \quad & \etil_i(b_i(k))&=b_i(k+1),\\
\ftil_i(b_j(k))    &=0,         & \etil_i(b_j(k))&=0, & &\text{for $j\neq i$}.
\end{alignat*}

Kashiwara has shown~\cite{Ka93} the existence of an injective strict crystal morphism $\Binf \hookrightarrow \Binf\ot\CB_i$ that is uniquely determined by setting $b_\infty \mapsto b_\infty \ot b_i(0)$, where $b_\infty$ is the highest weight element of~$\Binf$.
This implies that there exists an injective strict crystal morphism
\begin{equation}
\Binf \hookrightarrow \Binf\ot\CB_{i_m}\ot\cdots\ot\CB_{i_2}\ot\CB_{i_1},
\end{equation}
determined by $b_\infty \mapsto b_\infty\ot b_{i_m}(0)\ot\cdots\ot b_{i_1}(0)$, for any sequence $\iota = (i_k)_{k=1}^m$ of simple root indices.
The goal of this work is to give explicit descriptions of~$\Binf$ for the types $E_6$ and~$E_7$, with specific choices of the sequence~$\iota$ for each type.

Let us next recall the theory of marginally large tableaux for the exceptional types from~\cite{HonLee12}.
The boxes of a (marginally) large tableau are filled with elements from the basic crystal, which we will denote by~$\BaCr$.
We have $\BaCr = \CB(\La_1)$ for the $E_6$ type and $\BaCr = \CB(\La_7)$ for the $E_7$ type.
The two crystal graphs are explicitly given in the appendix.
The directed graph~$\BaCr$ is acyclic and admits a partial order structure.
For two elements $\bce_1, \bce_2\in\BaCr$, we will write $\bce_2 \comesfrom \bce_1$ if either $\bce_1 = \bce_2$ or there exists a directed path from~$\bce_1$ to~$\bce_2$ on the directed graph~$\BaCr$.

The rows of a tableau will be numbered from bottom to top, so that the first row of a tableau is its bottom or shortest row.
There are separate restrictions on which elements of the basic crystal~$\BaCr$ may be placed in the boxes of a (marginally) large tableau for each of its rows.
For each row index~$r$, we take~$\BaCr_r$ to be the full sub-graph of the directed graph~$\BaCr$ whose nodes consist of all basic crystal elements that may appear on the $r$-th row of a (marginally) large tableau.
Specifically, the nodes for these directed graphs are as follows for the $E_6$-type tableaux.
\begin{equation}
\begin{aligned}
\BaCr_5 &= \BaCr = \CB(\La_1)\\
\BaCr_4 &= \{
 \bce\in\BaCr \mid  \bar{6}1 \comesfrom \bce \comesfrom \bar{1}2
\}\\
\BaCr_3 &= \{
 \bce\in\BaCr \mid  \bar{4}2 \comesfrom \bce \comesfrom \bar{2}3
\}\\
\BaCr_2 &= \{
 \bce\in\BaCr \mid  \bar{5}\bar{6}3 \comesfrom \bce \comesfrom \bar{3}46
\}\\
\BaCr_1 &= \{
 \bar{5}6, \bar{4}56
\}
\end{aligned}
\end{equation}
However, we emphasize that each $\BaCr_r$ is to be treated as a directed graph that inherits the arrows and also the partial order relation $\comesfrom$ from~$\BaCr$.
These directed graphs are fully illustrated in Appendix~\ref{app:E6BaCr}.

For the $E_7$-type tableaux, the basic crystal elements that are allowed in each row are as follows.
\begin{equation}
\begin{aligned}
\BaCr_6 &= \BaCr = \CB(\La_7)\\
\BaCr_5 &= \{
 \bce\in\BaCr \mid  \bar{5}7 \comesfrom \bce \comesfrom \bar{7}1
\}\\
\BaCr_4 &= \{
 \bce\in\BaCr \mid  \bar{6}1 \comesfrom \bce \comesfrom \bar{1}2
\}\\
\BaCr_3 &= \{
 \bce\in\BaCr \mid  \bar{4}2 \comesfrom \bce \comesfrom \bar{2}3
\}\\
\BaCr_2 &= \{
 \bce\in\BaCr \mid  \bar{5}\bar{6}3 \comesfrom \bce \comesfrom \bar{3}46
\}\\
\BaCr_1 &= \{
 \bar{5}6, \bar{4}56
\}
\end{aligned}
\end{equation}
The directed graphs~$C_5$ and~$C_6$ for the $E_7$ type are illustrated in Appendix~\ref{app:E7BaCr}.
Note that our indexing scheme for the $E_6$ and $E_7$ root systems and the labeling given to elements of the basic crystal~$\BaCr$ for the two types are such that the directed graphs $C_1$, $C_2$, $C_3$, and~$C_4$ are shared by the~$E_6$ and $E_7$ types and the directed graphs~$C_5$ for the two types are very similar.

One can easily observe that each directed graph~$\BaCr_r$ for both the $E_6$ and $E_7$ types has a unique source node and a unique sink node.
It is also easy to check that, for each node $\bce\in\BaCr_r$, every directed path from the source node of~$\BaCr_r$ to~$\bce$ has the same length and that the same can be stated of directed paths from~$\bce$ to the sink node of~$\BaCr_r$.
We will refer to these directed path lengths as the \emph{distance} from the source node to node~$\bce$ and the \emph{distance} from node~$\bce$ to the sink node.

A tableau~$T$ is \emph{large} (for the $E_6$ or $E_7$ types) if it satisfies the following conditions, for each possible row index~$r$.
\begin{enumerate}
\item Each box on the $r$-th row of $T$ is filled with a node from~$\BaCr_r$.
\item The number of boxes on the $r$-th row of~$T$ labeled by the source node of~$\BaCr_r$ is greater than the total number of boxes on its immediate lower row.
In particular, at least $r$ boxes on the $r$-th row of~$T$ contain the source node of~$\BaCr_r$.\label{tmpitemlab}
\item The basic crystal elements that appear on the $r$-th row of~$T$ can be placed on a directed path that joins the source node to the sink node of~$\BaCr_r$.
In other words, the set of elements appearing on a row of~$T$ is totally ordered with respect to the partial order~$\comesfrom$ on~$\BaCr_r$.
\item The elements on a row of~$T$, read from left to right, follow the order of these elements on the directed path that was just mentioned.
In other words, the positional order of the elements within each row respects the order~$\comesfrom$.
In particular, the $r$-th row left-end box of~$T$ is always filled with the source node of~$\BaCr_r$.
\end{enumerate}
A large tableau is \emph{marginally} large, if
\begin{enumerate}
\item[\ref{tmpitemlab}${}^\prime$] The number of boxes on the $r$-th row of~$T$ labeled by the source node of~$\BaCr_r$ is greater than the total number of boxes on its immediate lower row by exactly one.
\end{enumerate}
The two definitions given above are equivalent to those given by~\cite{HonLee12}.

The set of all marginally large tableaux is denoted by~$\Tinf$ and has a crystal structure.
The Kashiwara operators act on a marginally large tableau in mostly the same way as they would act on a normal tableau, i.e., through an \emph{admissible reading} of a tableau followed by an application of the tensor product rule and reassembly into a tableau.
However, sometimes a certain column, consisting only of source nodes, may have to be added or removed to make the resulting reassembled tableau marginally large.
We refer the reader to~\cite{HonLee12} for the full crystal structure.
This paper relies heavily on the following results.

\begin{proposition}[\hspace{1sp}\cite{HonLee12}]\label{prop:Tinf}
The crystal~$\Tinf$ is isomorphic to the crystal~$\Binf$.
\end{proposition}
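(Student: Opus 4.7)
The plan is to exploit Kashiwara's characterization of $\Binf$ as a quotient $\bigsqcup_\la \Bla/\negthickspace\sim$, recalled in the introduction. The strategy is to produce, for each dominant integral weight $\la$, a crystal embedding $\Bla \hookrightarrow \Tinf$ so that the combined map $\bigsqcup_\la \Bla \to \Tinf$ is surjective and the fibers are precisely the equivalence classes of~$\sim$.

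First, I would verify that $\Tinf$ actually carries a well-defined crystal structure. Any marginally large tableau $T$ can be converted, via an admissible reading, into a tensor product of elements from the basic crystal~$\BaCr$, and the Kashiwara operators on this tensor product are computed by Proposition~\ref{prop:mtpr}. The delicate point is that a single application of $\ftil_i$ may destroy marginal largeness in one of two ways: the count of source-node boxes on some row may drop below the number required for largeness (requiring the insertion of a column of source nodes), or may exceed it by more than one (requiring the deletion of a column). I would check that this insertion/deletion prescription yields operators $\ftil_i,\etil_i$ on~$\Tinf$ satisfying the standard crystal axioms, in particular $\vphi_i(T) = \veps_i(T) + \langle h_i,\wt(T)\rangle$ and the mutual-inverseness relation $\etil_i\ftil_i T = T$.

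Next, for each $\la = \sum_i c_i\La_i$, I would identify $\Tla \subset \Tinf$ as the set of marginally large tableaux whose row-wise excesses of source nodes match the coefficients $c_i$ in a specified way, and exhibit a bijection $\Tla \leftrightarrow \Bla$ by stripping off a fixed leftmost block of source-node columns. The inverse sends a Young tableau in~$\Bla$ (obtained from the constructions of~\cite{HonLee12}) to its canonical marginally large padding. One then verifies that on the interior of $\Tla$ the Kashiwara operators agree with those on $\Bla$, and that the configurations in which column insertion/deletion occurs are precisely those in which the preimage in $\Bla$ is moved outside $\Bla$ and into a neighboring $\CB(\la')$, aligning with the identifications defining~$\sim$.

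The main obstacle is this last coordination step: showing that the column insertion/deletion rules on~$\Tinf$ exactly implement the equivalence relation~$\sim$, so that the induced map $\bigsqcup_\la \Bla/\negthickspace\sim\; \to \Tinf$ is a well-defined crystal isomorphism. This demands a case analysis at the source and sink nodes of each~$\BaCr_r$ and how they interact across adjacent rows, together with an analysis of how the admissible reading behaves under the shift of a whole column of source nodes. The argument is specific to the combinatorics of the basic crystals for~$E_6$ and~$E_7$ displayed in the appendix, and the fact that the source and sink nodes of $\BaCr_r$ differ from those of $\BaCr_{r-1}$ in a way that matches Kashiwara's $\Bla \hookrightarrow \Binf \ot T_\la$ on the nose is the key computational input.
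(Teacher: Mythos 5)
This proposition is not proved in the paper at all: it is quoted verbatim from \cite{HonLee12} and used as a black box, so there is no in-paper proof to compare your argument against. What can be assessed is whether your outline matches what the cited source must actually do, and whether it could be carried out as written.

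Your outline is essentially the strategy of \cite{HonLee08} for the classical and $G_2$ types, which the introduction summarizes: realize each $\Bla$ by Young tableaux, glue the $\Bla$'s along the identifications defining $\bigsqcup_\la \Bla/\negthickspace\sim\;\cong\Binf$, and observe that the marginally large tableaux are canonical representatives of the equivalence classes. The concrete gap is that for $E_6$ and $E_7$ the key input of your plan --- a Kashiwara--Nakashima-type tableau realization of every $\Bla$, which you invoke when you ``exhibit a bijection $\Tla\leftrightarrow\Bla$'' and when you send a Young tableau in $\Bla$ to its marginally large padding --- is exactly what this paper states is unavailable: the Young tableau results analogous to \cite{KasNak94,KanMis94} do not yet exist for the $E_6$, $E_7$, $E_8$, and $F_4$ types, and \cite{HonLee12} obtains $\Tinf\cong\Binf$ for these types only ``after just partially constructing the analogues,'' i.e., by a modified route that does not presuppose fully realized $\Bla$'s. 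As written, your argument cannot be executed for the types this paper treats without first supplying those missing realizations, which is a substantial project in its own right. The other steps you flag (well-definedness of the column insertion/deletion rule and its compatibility with $\sim$) are the right things to worry about, but they sit downstream of this missing input.
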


It will be convenient to have the symbol~$\BaCrCirc_r$ denote a set that lacks just the single source node from~$\BaCr_r$.
The sets are
\begin{equation}
\begin{aligned}
&\BaCrCirc_5 =  \BaCr_5 \setminus \{1\},\quad
 \BaCrCirc_4 =  \BaCr_4 \setminus \{\bar{1}2\},\quad
 \BaCrCirc_3 =  \BaCr_3 \setminus \{\bar{2}3\},\\
&\BaCrCirc_2 =  \BaCr_2 \setminus \{\bar{3}46\},\quad\text{and}\quad
 \BaCrCirc_1 =  \BaCr_1 \setminus \{\bar{4}56\},
\end{aligned}
\end{equation}
for the $E_6$ type and
\begin{equation}
\begin{aligned}
&\BaCrCirc_6 =  \BaCr_6 \setminus \{7\},\quad
 \BaCrCirc_5 =  \BaCr_5 \setminus \{\bar{7}1\},\quad
 \BaCrCirc_4 =  \BaCr_4 \setminus \{\bar{1}2\},\quad
 \BaCrCirc_3 =  \BaCr_3 \setminus \{\bar{2}3\},\\
&\BaCrCirc_2 =  \BaCr_2 \setminus \{\bar{3}46\},\quad\text{and}\quad
 \BaCrCirc_1 =  \BaCr_1 \setminus \{\bar{4}56\},
\end{aligned}
\end{equation}
for the $E_7$ type.
As with~$\BaCr_r$, these should be treated as directed graphs that inherit all possible arrows and the partial order~$\comesfrom$ from~$\BaCr$.

We will require further notation for an even smaller sub-structures of~$\BaCr_r$ given by
\begin{equation}
\BaCrBar_r = \{\bce\in\BaCr_r \mid \text{$\bce$ has in-degree 1}\}.
\end{equation}
We caution the reader that, for each $\BaCr_r \neq \BaCr$, the source node of~$\BaCr_r$ is of in-degree 1 in~$\BaCr$, but does not belong to $\BaCrBar_r$, because it is of in-degree 0 in $\BaCr_r$.
The nodes for~$\BaCrBar_r$ form a subset of the nodes for~$\BaCrCirc_r$.
The directed graph structure of~$\BaCrBar_r$ will not be important to us, but we will still need its partial order~$\comesfrom$, inherited from that of~$\BaCr$ or~$\BaCr_r$.
For the $E_6$-type, the nodes of~$\BaCrBar_r$ may be listed explicitly as
\begin{equation}
\begin{aligned}
\BaCrBar_5 &= \left\{
 \begin{aligned}
 &\bar{5},   \bar{4}5,  \bar{3}4,  \bar{2}6,  \bar{6}1,
  \bar{3}16, \bar{4}2,  \bar{1}5,  \bar{2}15, \bar{3}25,
  \bar{5}6,  \bar{4}56,\\
 &\bar{6}4,  \bar{3}46, \bar{2}3,  \bar{1}2
 \end{aligned}
\right\},\\
\BaCrBar_4 &= \{
  \bar{6}1,  \bar{3}16,  \bar{4}2,  \bar{2}15, \bar{3}25,
  \bar{5}6,  \bar{4}56,  \bar{6}4,  \bar{3}46, \bar{2}3
\},\\
\BaCrBar_3 &= \{
  \bar{4}2,  \bar{3}25, \bar{5}6,  \bar{4}56, \bar{6}4,
  \bar{3}46
\},\\
\BaCrBar_2 &= \{
  \bar{5}6,  \bar{4}56, \bar{6}4
\},\\
\BaCrBar_1 &= \{
  \bar{5}6
\}.
\end{aligned}
\end{equation}
For the $E_7$-type, we have
\begin{equation}
\begin{aligned}
\BaCrBar_6 &= \left\{
 \begin{aligned}
 &\bar{7},   \bar{1}7,  \bar{2}1,  \bar{3}2,  \bar{4}6,
  \bar{6}5,  \bar{3}56, \bar{2}4,  \bar{5}7,  \bar{4}57,
  \bar{1}6,  \bar{3}47, \bar{2}67, \bar{6}1,\\
 &\bar{3}16, \bar{7}5,  \bar{1}57, \bar{4}2,  \bar{2}15,
  \bar{3}25, \bar{5}6,  \bar{6}4,  \bar{4}56, \bar{3}46,
  \bar{2}3,  \bar{1}2,  \bar{7}1
 \end{aligned}
\right\},\\
\BaCrBar_5 &= \left\{
 \begin{aligned}
 &\bar{5}7,  \bar{4}57, \bar{3}47, \bar{2}67, \bar{6}1,
  \bar{3}16, \bar{4}2,  \bar{1}57, \bar{2}15, \bar{3}25,
  \bar{5}6,  \bar{4}56,\\
 &\bar{6}4,  \bar{3}46, \bar{2}3,  \bar{1}2
 \end{aligned}
\right\},
\end{aligned}
\end{equation}
and the rest of the sets, $\BaCrBar_4$, $\BaCrBar_3$, $\BaCrBar_2$, and $\BaCrBar_1$, are identical to their corresponding $E_6$-type sets.
These are the circled nodes appearing in the directed graphs given in the appendix.

Given a (marginally) large tableau~$T$, for each row index~$r$ and $\bce\in\BaCrCirc_r$, we define
\begin{equation}\label{eq:tib}
t_{r,\bce} = \left(\,
\begin{minipage}{22em}
the number of boxes appearing on the $r$-th row of~$T$ containing $x\in\BaCr_r$ such that $x \comesfrom \bce$
\end{minipage}\,
\right).
\end{equation}
This definition appeared previously in~\cite{HonLee14} for the classical Lie algebra types, with the small extension that set $t_{r,\bce}$ to infinity for the source node~$\bce$ of~$\BaCr_r$.
If $\bce\in\BaCr_r$ appears in one of the boxes on the $r$-th row of~$T$, then $t_{r,\bce}$ is the number of all boxes containing $\bce$ and all the boxes appearing to their right.
We take the disjoint union
\begin{equation}\label{eq:CupCirc}
\CupCirc = \bigsqcup_{r = \text{all rows}} \BaCrCirc_r
\end{equation}
and set
\begin{equation}
t_{\bce} = t_{r,\bce},
\end{equation}
for each $\bce\in\BaCrCirc_r \subset \CupCirc$.
The symbols $t_{r,\bce}$ and $t_{\bce}$ will be used interchangeably.
The collection of non-negative integers
\begin{equation}\label{eq:abc}
(t_{\bce})_{\bce\in\CupCirc}
=
(t_{r,\bce})_{r = \text{all rows},\; \bce\in\BaCrCirc_r}
\end{equation}
will be referred to as the set of \emph{accumulated box counts} for a tableau.

For a row index~$r$ and $\bce\in\BaCr_r$, we will use $\nbxr{\bce}{r}$ to denote the number of times $\bce$ appear in the boxes on the $r$-th row of a tableau.

\section{Description of $\Binf$ through Kashiwara Embedding for Type-$E_6$}\label{sec:KE.E6}

Before working with the Kashiwara embedding, we will study some properties of~$\Tinf$, the marginally large tableau realization of~$\Binf$.
It is rather easy to reconstruct a marginally large tableau~$T$ from its full set of accumulated box counts $(t_{\bce})_{\bce\in\CupCirc}$.
For example, referring to the directed graph~$\BaCr_3$, as given by Appendix~\ref{app:E6BaCr}, one can devise the following steps that recursively reveals the number of boxes containing each crystal element that should appear on the third row of~$T$.
\begin{equation}\label{eq:revexample}
\begin{aligned}
\nbxr{\bar{4}2}{3} &= t_{3,\bar{4}2}\\
\nbxr{\bar{3}\bar{5}24}{3} &= t_{3,\bar{3}\bar{5}24} - t_{3,\bar{4}2}\\
\nbxr{\bar{5}\bar{6}3}{3} &= t_{3,\bar{5}\bar{6}3} - t_{3,\bar{3}\bar{5}24}\\
\nbxr{\bar{3}25}{3} &= t_{3,\bar{3}25} - t_{3,\bar{3}\bar{5}24}\\
\nbxr{\bar{5}6}{3} &= t_{3,\bar{5}6} - t_{3,\bar{5}\bar{6}3}\\
\nbxr{\bar{4}\bar{6}35}{3} &= t_{3,\bar{4}\bar{6}35} - t_{3,\bar{3}25} - \nbxr{\bar{5}\bar{6}3}{3}\\
\nbxr{\bar{4}56}{3} &= t_{3,\bar{4}56} - t_{3,\bar{4}\bar{6}35} - \nbxr{\bar{5}6}{3}\\
\nbxr{\bar{6}4}{3} &= t_{3,\bar{6}4} - t_{3,\bar{4}\bar{6}35}\\
\nbxr{\bar{3}46}{3} &= t_{3,\bar{3}46} - t_{3,\bar{4}56} - \nbxr{\bar{6}4}{3}
\end{aligned}
\end{equation}
The missing count $\nbxr{\bar{2}3}{3}$ for the leftmost box of the third row can be recovered from the condition for the tableau to be \emph{marginally} large, once the tableau's first and second rows are ready.

This shows that a full set of accumulated box counts \emph{uniquely} determines the original marginally large tableau, but the next lemma implies that not all of the accumulated box counts are required.

\begin{lemma}\label{lem:tmin}
Let $(t_{\bce})_{\bce\in\CupCirc}$ be the set of accumulated box counts for a \textup{(}marginally\textup{)} large tableau.
If $\bce_0$, $\bce_1$, and~$\bce_2$ \textup{(}$\bce_1\neq\bce_2$\textup{)} are nodes of the directed graph~$\BaCrCirc_r \subset \CupCirc$ that are connected by \textup{(}single-hop\textup{)} arrows $\bce_0 \leftarrow \bce_1$ and $\bce_0 \leftarrow \bce_2$, then $t_{\bce_0} = \min(t_{\bce_1},t_{\bce_2})$.
\end{lemma}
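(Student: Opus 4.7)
\emph{Proof plan.} My plan is to reduce the lemma to a structural claim about the graph $\BaCr_r$: namely, that
\[
\{x \in \BaCr_r : x \comesfrom \bce_1\} \cap \{x \in \BaCr_r : x \comesfrom \bce_2\} \subseteq \{x \in \BaCr_r : x \comesfrom \bce_0\}.
\]
Once this is granted, the lemma will follow from a short chain argument exploiting the fact that the row-$r$ entries lie on a common directed path of $\BaCr_r$.

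First, the direction $t_{\bce_0} \leq \min(t_{\bce_1}, t_{\bce_2})$ is immediate from transitivity: the two hypothesized arrows give $\bce_0 \comesfrom \bce_1$ and $\bce_0 \comesfrom \bce_2$, so every $x \comesfrom \bce_0$ also satisfies $x \comesfrom \bce_1$ and $x \comesfrom \bce_2$, and counting the row-$r$ boxes whose contents lie in each downstream set yields the inequality. For the reverse direction, let $x_1, x_2, \dots, x_N$ be the row-$r$ entries read left to right; by the definition of a large tableau these lie along a single directed path of $\BaCr_r$ from source to sink, so $x_{k+1} \comesfrom x_k$ for every $k$. For each $c$ write $i_c$ for the least index with $x_{i_c} \comesfrom c$, setting $i_c = N+1$ if no such index exists; then $t_{r,c} = N + 1 - i_c$, so the inequality $t_{\bce_0} \geq \min(t_{\bce_1}, t_{\bce_2})$ is equivalent to $i_{\bce_0} \leq \max(i_{\bce_1}, i_{\bce_2})$. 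Assume without loss of generality that $i_{\bce_1} \leq i_{\bce_2}$, the degenerate case $i_{\bce_2} = N+1$ being already handled by the trivial direction since then $t_{\bce_0} \leq t_{\bce_2} = 0$. The entry $w = x_{i_{\bce_2}}$ satisfies $w \comesfrom \bce_2$ by definition and $w \comesfrom x_{i_{\bce_1}} \comesfrom \bce_1$ by the chain property, placing $w$ in the intersection appearing in the structural claim.

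Granted the structural claim, $w \comesfrom \bce_0$, hence $i_{\bce_0} \leq i_{\bce_2}$, finishing the proof. The structural claim itself is what I expect to be the main obstacle. Abstractly, it holds because $\BaCr$, being the crystal of a minuscule fundamental representation ($\La_1$ for $E_6$ and $\La_7$ for $E_7$), is the Hasse diagram of a distributive lattice under $\comesfrom$; the hypothesis that $\bce_0$ covers both $\bce_1$ and $\bce_2$ then forces $\bce_0 = \bce_1 \vee \bce_2$, so the common upper bounds of $\bce_1$ and $\bce_2$ agree with the upper bounds of $\bce_0$, and the same property is inherited by the convex sub-interval $\BaCr_r$. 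Closer in spirit to the paper's style, one can verify the claim directly from the explicit graphs in the appendices, inspecting in turn each node $\bce_0 \in \BaCrCirc_r$ that admits two or more in-neighbors; this is tedious but entirely finite and elementary.
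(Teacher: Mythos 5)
Your proof is correct and rests on the same two pillars as the paper's: the graph-inspection fact that the common downstream set of $\bce_1$ and $\bce_2$ coincides with that of $\bce_0$ (the paper states $G_1\cap G_2=G_0$ and checks it from the appendix diagrams, exactly your "structural claim"), combined with the total ordering of a row's entries along a directed path; your leftmost-witness indexing is just a different bookkeeping of the paper's dichotomy that the row cannot meet both $G_1\setminus G_0$ and $G_2\setminus G_0$. The one genuinely new ingredient is your observation that the structural claim follows abstractly from $\BaCr$ being the (distributive) lattice of a minuscule crystal, with $\bce_0=\bce_1\wedge\bce_2$ forced by the covering relations --- a cleaner justification than case-checking, which would also dispose of the in-degree-3 node in the $E_7$ case that the paper must treat separately.
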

\begin{proof}
Let us consider the sets $G_k = \{x\in\BaCr_r \mid x \comesfrom \bce_k\}$ ($k=0,1,2$).
We can check from the directed graphs of Appendix~\ref{app:E6BaCr} that the partial order structure on~$\BaCr_r$ is such that $G_1\cap G_2 = G_0$.
In particular, we have $G_1\setminus G_2 = G_1\setminus G_0$ and $G_2\setminus G_1 = G_2\setminus G_0$.

The assumption $x_1 \comesfrom x_2$ for a pair of elements $x_1\in G_1\setminus G_2$ and $x_2 \in G_2\setminus G_1$ implies $x_1\comesfrom x_2\comesfrom \bce_2$ and the contradiction $x_1\in G_2$, so that no pair of elements from $G_1\setminus G_0$ and $G_2\setminus G_0$ can be on the same directed path of~$\BaCr_r$.
In other words, the $r$-th row of a large tableau~$T$ cannot contain elements from both $G_1\setminus G_0$ and $G_2\setminus G_0$ at the same time.

Since $G_0 \subset G_1$ and $G_0 \subset G_2$, if the $r$-th row of~$T$ contains no element of $G_1\setminus G_0$, then we must have $t_{\bce_1} = t_{\bce_0} \leq t_{\bce_2}$.
We must similarly have $t_{\bce_2} = t_{\bce_0} \leq t_{\bce_1}$, when $T$ contains no elements of $G_2\setminus G_0$.
In both cases, we have $t_{\bce_0} = \min(t_{\bce_1},t_{\bce_2})$.
\end{proof}

Let us collect the properties we know of the set of accumulated box counts into a definition.

\begin{definition}\label{def:pathcons1}
For each row index~$r$, let $\BaCr^\prime_r$ be a full sub-graph of the directed graph~$\BaCrCirc_r$.
Each $\BaCr^\prime_r$ inherits the partial order $\comesfrom$ from~$\BaCr_r$.
A set of non-negative integers $(s_{\bce})_{\bce\in\bigsqcup_r\BaCr^\prime_r}$ is \emph{path-consistent}, if we have
\begin{enumerate}
\item $s_{\bce_1} \leq s_{\bce_2}$ for every $\bce_1, \bce_2\in\BaCr^\prime_r \subset \bigsqcup_r\BaCr^\prime_r$ such that $\bce_1 \comesfrom \bce_2$ and
\item $s_{\bce} = \min(s_{\bce_1},s_{\bce_2})$ for every $\bce, \bce_1, \bce_2 \in\BaCr^\prime_r \subset \bigsqcup_r\BaCr^\prime_r$ ($\bce_1\neq\bce_2$) that are connected by (single-hop) arrows $\bce \leftarrow \bce_1$ and $\bce \leftarrow \bce_2$, .
\end{enumerate}
\end{definition}

It is clear from Lemma~\ref{lem:tmin} that any set of accumulated box counts for a large tableau is path-consistent, and we will soon show that it also works the other way around.
The term path-consistent is a reflection of the condition which requires the set of box labels appearing in the $r$-th row of a large tableau to reside on a directed path of~$\BaCr_r$.

\begin{lemma}\label{lem:capmin}
Let $\mathbf{s} = (s_{\bce})_{\bce\in\CupCirc}$ be a path-consistent set of non-negative integers, and let $\bce_1, \bce_2 \in \BaCrCirc_r$ be two nodes that cannot be placed on a directed path of~$\BaCr_r$, i.e., they satisfy neither $\bce_1 \comesfrom \bce_2$ nor $\bce_2 \comesfrom \bce_1$.
Set $G_1 = \{x\in\BaCrCirc_r \mid x\comesfrom \bce_1\}$ and $G_2 = \{x\in\BaCrCirc_r \mid x\comesfrom \bce_2\}$.
Then, we have $G_1\cap G_2 = \{x\in\BaCrCirc_r \mid x\comesfrom \bce_0\}$ for some node $\bce_0\in\BaCrCirc_r$.
Furthermore, for this~$\bce_0$, we have $s_{\bce_0} = \min(s_{\bce_1},s_{\bce_2})$.
\end{lemma}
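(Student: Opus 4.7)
The lemma has two assertions: the existence of the node~$\bce_0$ and the min-equality $s_{\bce_0}=\min(s_{\bce_1},s_{\bce_2})$. My plan is to address them in turn. The existence of~$\bce_0$ amounts to saying that $\BaCrCirc_r$, equipped with~$\comesfrom$, is a meet-semilattice, and I would verify this by direct inspection of the crystal graphs catalogued in the appendix: for each incomparable pair $\bce_1,\bce_2$ in a given $\BaCrCirc_r$, identify the greatest common lower bound and check that its down-set coincides with $G_1\cap G_2$. Since each $\BaCrCirc_r$ has only finitely many nodes, this is a finite verification.

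For the equality, condition~1 of path-consistency yields $s_{\bce_0}\leq\min(s_{\bce_1},s_{\bce_2})$ immediately. For the reverse inequality, I would first argue that $\bce_0$ must admit two \emph{distinct} in-neighbors $\bce_1^*,\bce_2^*$ in $\BaCrCirc_r$ with $\bce_1^*\comesfrom\bce_1$ and $\bce_2^*\comesfrom\bce_2$. Indeed, if $\bce_0$ had a unique in-neighbor~$\bce'$, every element strictly above $\bce_0$ would lie above~$\bce'$; in particular $\bce'$ would be a common lower bound of $\bce_1,\bce_2$ strictly exceeding the meet~$\bce_0$, contradicting the first part of the lemma. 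The same reasoning forces $\bce_1^*\neq\bce_2^*$. Condition~2 of path-consistency then gives $s_{\bce_0}=\min(s_{\bce_1^*},s_{\bce_2^*})$.

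To close the gap, I would induct on $d_1+d_2$, where $d_k$ is the length of the directed path from $\bce_k$ down to $\bce_0$ inside $\BaCrCirc_r$. The base case, both lengths equal to one, corresponds to $\bce_1^*=\bce_1$ and $\bce_2^*=\bce_2$, so the equality just obtained is already what is wanted. For the inductive step, say $d_2\geq 2$: the pair $(\bce_1,\bce_2^*)$ remains incomparable with meet still~$\bce_0$, so the inductive hypothesis yields $s_{\bce_0}=\min(s_{\bce_1},s_{\bce_2^*})$. The main obstacle is upgrading $s_{\bce_2^*}$ to $s_{\bce_2}$: in the problematic sub-case $s_{\bce_2^*}<s_{\bce_2}$, one must chase condition~2 at successive intermediate in-neighbors along the directed path from $\bce_2^*$ up to $\bce_2$ to force a telescoping chain of equalities. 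Verifying that the specific in-degree configurations recorded in the appendix actually support this closing step is the delicate part of the proof.
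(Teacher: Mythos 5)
Your handling of the first claim (finite inspection of the graphs), of the easy inequality $s_{\bce_0}\le\min(s_{\bce_1},s_{\bce_2})$, and of the base case $d_1=d_2=1$ is sound. The gap is in the inductive step, and it is not a deferrable finishing detail: the ``telescoping chain of equalities'' you invoke does not follow from path-consistency alone. After the inductive hypothesis gives $s_{\bce_0}=\min(s_{\bce_1},s_{\bce_2^*})$, the only dangerous sub-case is $s_{\bce_2^*}<s_{\bce_1}$ together with $s_{\bce_2^*}<s_{\bce_2}$, and you must rule it out; but condition~2 of Definition~\ref{def:pathcons1} is vacuous at any node of in-degree one, so your chase along the path from $\bce_2^*$ up to $\bce_2$ stalls the moment an intermediate node has no second in-neighbor. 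An abstract configuration shows the statement genuinely fails without further structural input: take arrows $\bce_0\leftarrow a\leftarrow \bce_1$ and $\bce_0\leftarrow b\leftarrow \bce_2$ with no node above both $a$ and $b$, and set $s_{\bce_0}=s_a=s_b=0$, $s_{\bce_1}=s_{\bce_2}=1$. This labeling is path-consistent, $G_1\cap G_2=\{x\mid x\comesfrom\bce_0\}$, and yet $s_{\bce_0}=0<1=\min(s_{\bce_1},s_{\bce_2})$. Since nothing in your argument (outside the part you postpone) distinguishes the actual graphs $\BaCrCirc_r$ from this configuration, the postponed part carries the entire weight of the lemma.

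What rescues the equality in the actual graphs, and what the paper's proof supplies, is precisely the structural fact you never establish: for every incomparable pair $\bce_1,\bce_2$ one can locate inside $\BaCrCirc_r$ a full \emph{rectangle} --- a grid with none of its internal arrows missing --- having $\bce_1$ and $\bce_2$ as facing vertices and $\bce_0$ at the near corner. On such a grid every interior node has in-degree two, so conditions~1 and~2 can be propagated: the paper expands $s_{\bce_0}$ as a minimum over successive diagonals of the rectangle and then collapses that minimum to $\min(s_{\bce_1},s_{\bce_2})$ using the monotonicity of $\mathbf{s}$ along the two boundary paths. Your induction could be repaired by first proving this rectangle property (or at least that every intermediate node on the path from $\bce_2^*$ to $\bce_2$ acquires a second in-neighbor inside the interval between $\bce_0$ and the pair), but that case-check is exactly what you set aside as ``the delicate part,'' and for the $E_7$ graph $\BaCr_6$ it even requires handling two exceptional pairs where no genuine rectangle exists. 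As it stands, the proposal identifies where the difficulty lies but does not resolve it.
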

\begin{proof}
Referencing the directed graphs $\BaCrCirc_r$ from Appendix~\ref{app:E6BaCr}, one can easily check that it is always possible to locate a \emph{rectangle} of the form
\begin{center}
\includegraphics{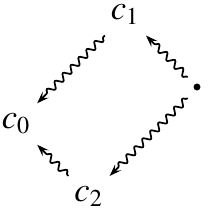}
\end{center}
from within~$\BaCrCirc_r$, for any pair of $\bce_1$ and $\bce_2$ having the assumed properties.
This observation and the layout of the graphs make the first claim evident.
It only remains to show that $s_{\bce_0} = \min(s_{\bce_1},s_{\bce_2})$ holds for the $\bce_0$ found at the corner of the rectangle.

Let us consider the following directed graph of rectangle shape and assume that $(s_{k})_{k=1}^{12}$ is a set of non-negative integers, labeled by their twelve nodes, that satisfies the two conditions of Definition~\ref{def:pathcons1}, with the indices suitably adjusted to fit the current situation.
\begin{center}
\includegraphics{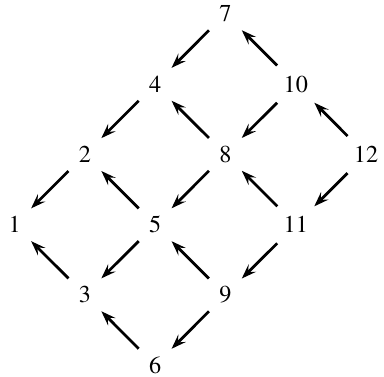}
\end{center}
Then, it is easy to check that
\begin{equation*}
\begin{aligned}
s_{1}
 &= \min(s_2,s_3)\\
 &= \min\big(\min(s_4,s_5),\min(s_5,s_6)\big)
  = \min(s_4,s_5,s_6)\\
 &= \min\big(\min(s_7,s_8),\min(s_8,s_9),s_6\big)
  = \min(s_7,s_8,s_9,s_6)\\
 &= \min\big(s_7,\min(s_{10},s_{11}),s_9,s_6\big)
  = \min(s_7,s_{10},s_{11},s_9,s_6)\\
 &= \min(s_7,s_6).
\end{aligned}
\end{equation*}
Here, the final equality follows from the inequalities $s_7\leq s_{10}$ and $s_6 \leq s_9 \leq s_{11}$ implied by the first condition of Definition~\ref{def:pathcons1}.
The two nodes on the final line, namely, $6$ and~$7$, are two vertices of the rectangle that are facing each other.

Note that the rectangle that we previously located within~$\BaCr_r$, which takes $\bce_1$, $\bce_2$, and~$\bce_0$ as three of its vertices, has none of its \emph{internal} arrows missing.
The above argument is general enough to be applicable to this rectangle.
\end{proof}

The conclusions of this lemma apply to any full set of accumulated box counts for a large tableau, since it is path-consistent by Lemma~\ref{lem:tmin}.
We can now show that every path-consistent full set of non-negative integers appears as the set of accumulated box counts for some tableau.

\begin{lemma}\label{lem:ABCsurj}
The set of accumulated box counts $(t_{\bce})_{\bce\in\CupCirc}$ for a \textup{(}marginally\textup{)} large tableau~$T$ is path-consistent.
Conversely, given any path-consistent set of non-negative integers $\mathbf{s} = (s_{\bce})_{\bce\in\CupCirc}$, there exists a unique marginally large tableau whose set of accumulated box counts is~$\mathbf{s}$.
\end{lemma}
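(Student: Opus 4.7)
The forward direction is immediate. Condition~(1) of path-consistency follows from the definition~(\ref{eq:tib}) together with the transitivity of~$\comesfrom$: whenever $\bce_1 \comesfrom \bce_2$, every $x \in \BaCr_r$ satisfying $x \comesfrom \bce_1$ also satisfies $x \comesfrom \bce_2$, so $t_{\bce_1} \leq t_{\bce_2}$; and condition~(2) is precisely Lemma~\ref{lem:tmin}. Uniqueness in the reverse direction is just as quick: for each row~$r$, the row counts $\nbxr{\bce}{r}$ for $\bce \in \BaCrCirc_r$ are determined from~$\mathbf{s}$ by the recursive inclusion-exclusion procedure illustrated in~(\ref{eq:revexample}), moving from the sink of~$\BaCr_r$ toward its source along the arrows of~$\BaCrCirc_r$, and the source-node count is forced by the marginally large condition once the row below has been constructed.

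For existence, given path-consistent~$\mathbf{s}$, the plan is to build~$T$ row by row from the bottom, using exactly these recursive formulas as the definition of the row counts. Three things must then be verified: (a)~each $\nbxr{\bce}{r}$ is a non-negative integer; (b)~the nodes receiving positive count on row~$r$ form a chain under~$\comesfrom$ in~$\BaCr_r$, so that after being written out in $\comesfrom$-order the row is a valid row of a large tableau; and (c)~the accumulated box counts $t_\bce(T)$ of the constructed tableau reproduce~$\mathbf{s}$. Part~(a) follows from condition~(1) of path-consistency, arrow by arrow: single-arrow subtractions give non-negative differences directly, and the inclusion-exclusion expressions that arise at rectangle vertices remain non-negative because the ancestor value dominates both $s_{\bce'_1}$ and $s_{\bce'_2}$. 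Part~(c) is a routine telescoping check matching the construction to~(\ref{eq:tib}).

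The main obstacle will be part~(b). The approach is to argue by contradiction: if two incomparable $\bce_1, \bce_2 \in \BaCrCirc_r$ both receive positive counts under the construction, Lemma~\ref{lem:capmin} supplies a common descendant $\bce_0 \in \BaCrCirc_r$ together with a rectangle in~$\BaCrCirc_r$ having $\bce_0$ as one corner and some node $\bce_{\max}$ leading to both $\bce_1$ and $\bce_2$ as the opposite corner. Assuming without loss of generality that $s_{\bce_1} \leq s_{\bce_2}$, the min-equality of Lemma~\ref{lem:capmin} gives $s_{\bce_0} = s_{\bce_1}$. Tracing the recursion down from $\bce_1$ to $\bce_0$ along the side of the rectangle, each intermediate $s$-value must equal $s_{\bce_1}$ by path-consistency condition~(1), so that all single-arrow differences along this side vanish; the contribution at $\bce_1$ therefore collapses to zero, contradicting the assumption. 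This establishes~(b), and together with~(a) and~(c) completes the construction.
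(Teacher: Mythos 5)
Your proposal is correct and follows essentially the same route as the paper's proof: both invert the accumulated counts by the sink-to-source recursion, establish non-negativity of the resulting box counts from condition~(1) via the $\max$ identity at rectangle vertices, rule out incomparable nodes with positive counts using Lemma~\ref{lem:capmin}, and recover the source-node counts from the marginally large condition. The only cosmetic difference is that the paper runs your parts (a) and (b) together as a single induction on the distance to the sink and phrases the final contradiction as $s_{\bce_0}$ being strictly smaller than $\min(s_{\bce_1},s_{\bce_2})$, whereas you derive $n_{\bce_1}=0$ directly (your ``single-arrow differences vanish'' wording is slightly loose, since $n_{\bce_1}$ need not be a single-arrow difference, but the inequality $n_{\bce_1}\le s_{\bce_1}-s_{\bce_0}=0$ you need follows from non-negativity); these are the same argument.
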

\begin{proof}
The first claim follows from the definition for a large tableau and Lemma~\ref{lem:tmin}, and we have already discussed the uniqueness appearing in the second claim.
It suffices to show that every path-consistent set of non-negative integers indexed by~$\CupCirc$ is the set of accumulated box counts for some marginally large tableau.

Let us first proceed as with the example~\eqref{eq:revexample} and generate a set of numbers $\mathbf{n} = (n_{\bce})_{\bce\in\CupCirc}$, which could be interpreted as the (non-accumulated) box counts for a large tableau.
For each row index~$r$, one starts from the sink node of the directed graph~$\BaCr_r$ by setting $n_{\text{sink node}} = s_{\text{sink node}}$ and works inductively.
Once $n_{\bce}$ has been computed for all $\bce\in\BaCrCirc_r$ that are at a certain distance~$k$ to the sink node, one treats the nodes that are at distance $k+1$ to the sink node.
More precisely, we set
\begin{equation*}
n_{\bce} = s_{\bce} - \sum_{x\comesfrom \bce, x\neq \bce} n_{x}
\end{equation*}
on every occasion for each $\bce\in\BaCrCirc_r$.
The order in which this operation is carried out among nodes that are at equal distance to the sink node is clearly irrelevant.

By construction, the \emph{accumulated counts} for the~$\mathbf{n}$ generated in this manner will be~$\mathbf{s}$.
That is, we will have
\begin{equation*}
s_{\bce} = \sum_{x\comesfrom \bce} n_{x},
\end{equation*}
for every $\bce\in\BaCrCirc_r$.
However, it is not yet clear if all the non-zero $n_{\bce}$'s can be placed on a directed path that connects the source node to the sink node of~$\BaCr_r$, or even if the integers $n_{\bce}$ are non-negative.
Let us show that these two properties hold through an induction on the distance to the sink node of~$\BaCrCirc_r$.

Our induction hypothesis is that the set of $n_{\bce}$'s with the node~$\bce$ restricted to those that are at most distance~$k$ to the sink node of~$\BaCrCirc_r$ are non-negative and that the collection of the nodes corresponding to the non-zero $n_{\bce}$'s satisfies the on-a-path requirement.

Let $\bce$ be at distance $k+1$ to the sink node of~$\BaCrCirc_r$.
If $\bce_1\leftarrow \bce$ is the only arrow leaving from~$\bce$, then we know from the way $n_{\bce}$ was computed that
\begin{equation*}
n_{\bce} = s_{\bce} - s_{\bce_1}.
\end{equation*}
If there are two arrows $\bce_1\leftarrow \bce$ and $\bce_2\leftarrow \bce$ leaving from~$\bce$, since our induction hypothesis ensures that the nodes $\bce_1$ and~$\bce_2$ reside in the region satisfying the on-a-path condition, we can follow the arguments given in the proof of Lemma~\ref{lem:tmin} to state
\begin{equation*}
n_{\bce} = s_{\bce} - \max(s_{\bce_1}, s_{\bce_2}).
\end{equation*}
Both of these right-hand side values must be non-negative integers by the first condition of Definition~\ref{def:pathcons1}, so that all integers $n_{\bce}$ at up to distance $k+1$ to the sink node of~$\BaCrCirc_r$ must be non-negative.

Now, suppose we have two positive integers $n_{\bce_1}$ and $n_{\bce_2}$ for some $\bce_1$ and $\bce_2$ that lie within distance $k+1$ to the sink of~$\BaCrCirc_r$, and let us assume that the two cannot be placed on a directed path of~$\BaCr_r$.
Setting $G_1 = \{x\in\BaCrCirc_r \mid x\comesfrom \bce_1\}$ and $G_2 = \{x\in\BaCrCirc_r \mid x\comesfrom \bce_2\}$, we know from Lemma~\ref{lem:capmin} that $G_1\cap G_2 = \{x\in\BaCrCirc_r \mid x\comesfrom \bce_0\}$, for some node $\bce_0\in\BaCrCirc_r$, and that $s_{\bce_0} = \min(s_{\bce_1},s_{\bce_2})$.
However, since both $n_{\bce_1}$ and $n_{\bce_2}$ were chosen to be positive, $s_{\bce_0} = \sum_{x\comesfrom \bce_0} n_{x}$ must be strictly smaller than both $s_{\bce_1}$ and $s_{\bce_2}$.
This contradiction completes our induction step.

Thus, we have obtained a set of non-negative integers $\mathbf{n} = (n_{\bce})_{\bce\in\CupCirc}$ having properties that allow it to be interpreted as the set of (non-accumulated) box counts for a large tableau.
As was with the example~\eqref{eq:revexample}, the missing counts for the leftmost boxes on each tableau row, i.e., the boxes labeled by the source node of~$\BaCr_r$, may be determined from the condition for the tableau to be \emph{marginally} large.
\end{proof}

The combination of Lemma~\ref{lem:tmin} and Lemma~\ref{lem:ABCsurj} results in a more compact description of a marginally large tableau in terms of its accumulated box counts.
We introduce another index set
\begin{equation}\label{eq:CupBar}
\CupBar = \bigsqcup_{r = \text{all rows}} \BaCrBar_r
\end{equation}
that is similar to~\eqref{eq:CupCirc}.

\begin{proposition}\label{prop:bijection}
The reduced set of accumulated box counts $\bar{\mathbf{t}} = (t_{\bce})_{\bce\in\CupBar}$ for a \textup{(}marginally\textup{)} large tableau is path-consistent.
Conversely, given any path-consistent set of non-negative integers $\bar{\mathbf{s}} = (s_{\bce})_{\bce\in\CupBar}$, there exists a unique marginally large tableau whose reduced set of accumulated box counts is~$\bar{\mathbf{s}}$.
\end{proposition}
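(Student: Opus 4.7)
The plan is to reduce Proposition~\ref{prop:bijection} to Lemma~\ref{lem:ABCsurj} by establishing a unique path-consistent extension from $\CupBar$ to $\CupCirc$. The forward direction is almost automatic: the full set $\mathbf{t}$ is path-consistent on $\CupCirc$ by Lemma~\ref{lem:ABCsurj}, and since every $\bce\in\BaCrBar_r$ has in-degree $1$ in $\BaCr_r$ (and hence at most $1$ in the full sub-graph $\BaCrBar_r$), the second condition of Definition~\ref{def:pathcons1} becomes vacuous on $\CupBar$, while the first condition restricts directly.

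For the reverse direction, given a path-consistent $\bar{\mathbf{s}}=(s_\bce)_{\bce\in\CupBar}$, I would define an extension $\mathbf{s}=(s_\bce)_{\bce\in\CupCirc}$ by the closed formula
\begin{equation*}
s_{\bce} \;=\; \min\bigl\{\,s_{\bce'} \mid \bce'\in\BaCrBar_r,\ \bce\comesfrom\bce'\,\bigr\}
\qquad (\bce\in\BaCrCirc_r).
\end{equation*}
This agrees with the given values on $\CupBar$: when $\bce\in\BaCrBar_r$, the first condition of $\bar{\mathbf{s}}$ ensures the minimum is attained at $\bce'=\bce$. The indexing set is non-empty for every $\bce\in\BaCrCirc_r$, a fact to be verified by inspection of Appendix~\ref{app:E6BaCr}. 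Path-consistency of $\mathbf{s}$ on $\CupCirc$ then follows in two short steps: monotonicity along $\comesfrom$ reduces to the inclusion of minimum-sets (since $\bce_1\comesfrom\bce_2\comesfrom\bce'$ gives $\bce_1\comesfrom\bce'$), and the pairwise-min identity at any $\bce_0\in\BaCrCirc_r\setminus\BaCrBar_r$ with its two incoming arrows $\bce_0\leftarrow\bce_1$ and $\bce_0\leftarrow\bce_2$ reduces to the set identity
\begin{equation*}
\{\bce'\in\BaCrBar_r \mid \bce_0\comesfrom\bce'\}
\;=\;
\{\bce'\in\BaCrBar_r \mid \bce_1\comesfrom\bce'\}
\cup
\{\bce'\in\BaCrBar_r \mid \bce_2\comesfrom\bce'\},
\end{equation*}
which is valid because any directed path ending at $\bce_0$ must traverse its final arrow through $\bce_1$ or $\bce_2$.

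With $\mathbf{s}$ path-consistent on $\CupCirc$, Lemma~\ref{lem:ABCsurj} produces a unique marginally large tableau $T$ whose full accumulated box counts are $\mathbf{s}$, and hence whose reduced counts are $\bar{\mathbf{s}}$. Uniqueness of $T$ among all marginally large tableaux with reduced counts $\bar{\mathbf{s}}$ follows because Lemma~\ref{lem:tmin} forces such a tableau's full counts to satisfy $t_{\bce_0}=\min(t_{\bce_1},t_{\bce_2})$ at every $\bce_0\in\BaCrCirc_r\setminus\BaCrBar_r$; iterating this relation recovers exactly the closed formula above, so the full counts coincide with $\mathbf{s}$, and the uniqueness clause of Lemma~\ref{lem:ABCsurj} gives the same tableau.

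The main obstacle I foresee lies in two graph-inspection tasks against Appendix~\ref{app:E6BaCr}: confirming that every $\bce\in\BaCrCirc_r$ satisfies $\bce\comesfrom\bce'$ for some $\bce'\in\BaCrBar_r$, and confirming that no node in $\BaCrCirc_r\setminus\BaCrBar_r$ has in-degree exceeding two, so that the pairwise-min prescription is unambiguously equivalent to minimizing over all immediate predecessors. Neither obstacle is conceptually deep, but both rely on the explicit combinatorics of the crystal graphs displayed in the appendix.
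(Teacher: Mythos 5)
Your proposal is correct and follows the same overall reduction as the paper: both directions come down to showing that $\bar{\mathbf{s}}$ admits a unique path-consistent extension $\mathbf{s}$ to $\CupCirc$, after which Lemma~\ref{lem:ABCsurj} finishes the job. The difference is in how the extension is produced. The paper defines $\mathbf{s}$ recursively, working outward from the source node and setting $s_{\bce}=\min(s_{\bce_1},s_{\bce_2})$ at each in-degree-$2$ node, and then verifies the two conditions of Definition~\ref{def:pathcons1} by local arguments at the moment each value is added. You instead give the closed formula $s_{\bce}=\min\{s_{\bce'}\mid \bce'\in\BaCrBar_r,\ \bce\comesfrom\bce'\}$ and verify path-consistency globally via inclusions and unions of $\BaCrBar_r$-ancestor sets. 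The two constructions yield the same $\mathbf{s}$ (your uniqueness argument, unwinding the forced min-relations, shows exactly this), and your version makes uniqueness of the extension somewhat more transparent since the formula is manifestly determined by $\bar{\mathbf{s}}$. The graph-inspection burdens you flag are real but benign for $E_6$: non-emptiness of the ancestor set follows because every node of $\BaCrCirc_r$ lies above some distance-$1$ node, which necessarily has in-degree $1$ and hence lies in $\BaCrBar_r$; and all in-degrees in the $E_6$ graphs are at most $2$. One caveat worth recording: for the $E_7$ type the node $\bar{2}\bar{6}\bar{7}13$ has in-degree $3$, and there your closed formula gives the three-way minimum, whereas condition (2) of Definition~\ref{def:pathcons1} demands equality with each \emph{pairwise} minimum; this requires the additional check (done in the paper's Section~\ref{sec:KE.E7}) that two of the three parent values always coincide with the minimum, so the closed formula does not by itself remove that case analysis.
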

\begin{proof}
The first claim follows trivially from the first claim of Lemma~\ref{lem:ABCsurj}, since $\CupBar \subset \CupCirc$.
To prove the second claim, it suffices to show that $\bar{\mathbf{s}}$ can be extended uniquely to a set of non-negative integers $\mathbf{s} = (s_{\bce})_{\bce\in\CupCirc}$ that is also path-consistent, as the extended $\mathbf{s}$ may be connected to a unique marginally large tableau through Lemma~\ref{lem:ABCsurj}.

Since every node of $\CupCirc \setminus \CupBar$ has in-degree 2, there can be at most one approach to the extension, if the second condition of Definition~\ref{def:pathcons1} is to be satisfied by the result.
For each row index~$r$, one starts from the node(s) at distance $1$ from the source node of~$\BaCr_r$ and works inductively on the distance from the source node to obtain a full set of integers~$\mathbf{s}$ indexed by~$\CupCirc$.
Note that all the nodes at distance $1$ from the source node belong to~$\CupBar$, so that the base case is ready.
When the~$s_{\bce}$ values for every~$\bce$ at distance~$k$ from the source node are ready, one adds in every missing $s_{\bce}$ value through the $s_{\bce} = \min(s_{\bce_1},s_{\bce_2})$ rule for $\bce$ at distance $k+1$ from the source node.
It is clear that the fully extended~$\mathbf{s}$ consists of non-negative integers, but it remains to check whether~$\mathbf{s}$ satisfies the two conditions of Definition~\ref{def:pathcons1}.

Dealing with the second condition of Definition~\ref{def:pathcons1} is easier.
The addition of each $s_{\bce}$ is made to automatically satisfy the second condition in relation to the parent nodes of~$\bce$.
In the other direction, at the time when $s_{\bce}$ is being added, every node~$\bce_1$ that is further away from the source node than~$\bce$ with a $s_{\bce_1}$ value already assigned to it belongs to $\CupBar$ and has an in-degree of~$1$, so the second condition is vacuous in relation to these nodes.

Let us now deal with the first condition of Definition~\ref{def:pathcons1}.
We assume a path-consistent set of non-negative integers $(s_{\bce})_{\bce\in\BaCr^\prime_r}$ and suppose that we are adding one more $s_{\bce}$ value to this set through an application of the second condition of Definition~\ref{def:pathcons1}.
That is, we assume that there are arrows $\bce \leftarrow \bce_1$ and $\bce \leftarrow \bce_2$, for some $\bce\not\in\BaCr^\prime_r$ and $\bce_1, \bce_2\in \BaCr^\prime_r$, and set
\begin{equation*}
s_{\bce} = \min(s_{\bce_1},s_{\bce_2}).
\end{equation*}
Since the only in-bound arrows for~$\bce$ are from~$\bce_1$ and~$\bce_2$, if $\bce\comesfrom x$ for some $x\in\BaCr^\prime_r$, then we must have either $\bce_1 \comesfrom x$ or $\bce_2 \comesfrom x$.
This implies either $s_{\bce} \leq s_{\bce_1} \leq s_{x}$ or $s_{\bce} \leq s_{\bce_2} \leq s_{x}$, so that $s_{\bce} \leq s_{x}$.
In the opposite direction, if $x \comesfrom \bce$ for some $x\in\BaCr^\prime_r$, then we have both $x \comesfrom \bce_1$ and $x\comesfrom \bce_2$, so that $s_{x} \leq s_{\bce_1}$ and $s_{x} \leq s_{\bce_2}$.
Hence, we must have $s_{x} \leq \min(s_{\bce_1},s_{\bce_2}) = s_{\bce}$, and this completes the proof.
\end{proof}

Since $\BaCrBar_r$ consists of the in-degree $1$ nodes from~$\BaCr_r$, the second condition of Definition~\ref{def:pathcons1} is vacuous on~$\BaCrBar_r$.
In other words, the notation of being path-consistent is very simple for a set of non-negative integers indexed by~$\CupBar$.

\begin{remark}
A set of non-negative integers $(s_{\bce})_{\CupBar}$ is path-consistent if and only if we have $s_{\bce_1} \leq s_{\bce_2}$ for every $\bce_1,\bce_2\in\BaCrBar_r\subset\CupBar$ such that $\bce_1 \comesfrom \bce_2$.
\end{remark}

Recall from Proposition~\ref{prop:Tinf} that $\Tinf$, the set of all marginally large tableaux, is a realization of the crystal~$\Binf$.
Proposition~\ref{prop:bijection} provides an explicit bijection between $\Tinf$ and the set
\begin{equation}\label{eq:rawset}
\left\{
  (s_{\bce})_{\bce}
  {\textstyle \in \prod_{\bce\in\CupBar} \mathbf{Z}}
\;\left\vert\;
\begin{minipage}{16em}
(a)~$s_{\bce} \geq 0$ for every $\bce\in\CupBar$;\\
(b)~$s_{\bce_1} \leq s_{\bce_2}$ for every $\bce_1, \bce_2\in\BaCrBar_r \subset \CupBar$ such that $\bce_1 \comesfrom \bce_2$
\end{minipage}
\right.
\right\},
\end{equation}
which could potentially be easier to handle than~$\Tinf$.
Our next goal is to interpret the set~\eqref{eq:rawset} as a crystal so that the bijection becomes a crystal isomorphism.

Noting that the abstract crystal $\CB_i$, explained in Section~\ref{sec:not}, is identical $\mathbf{Z}$ as a set, we wish to essentially replace the $\prod_{\bce\in\CupBar} \mathbf{Z}$ of~\eqref{eq:rawset} with something that could be expressed as $\otb_{\bce\in\CupBar} \CB_\bce$, where each $\CB_\bce$ is set to one of the $\CB_i$'s.
However, since the tensor product rule for abstract crystals is not symmetric on the two components and the index set $\CupBar$ is not linearly ordered, such an expression is ambiguous.
Below, we will clarify the meaning of $\otb_{\bce\in\CupBar} \CB_\bce$ by first describing the object we wish to represent with this expression and then connecting the object with the tensor product indexed by~$\CupBar$.

Consider any directed path on the $E_6$-type directed graph~$\BaCr_5$ that connects its unique source node to its unique sink node.
This is a sequence of arrow colors $i\in I$, and it can readily be understood to be a tensor product of the corresponding~$\CB_i$'s.
More precisely, given a source-to-sink directed path
\begin{equation}
\mathbf{p}_5 = \big(
\bar{5}
\xleftarrow{\ i_{16}\ } \cdots \cdots \cdots
\xleftarrow{\ i_2\ } \;
\xleftarrow{\ i_1\ }
1 \big)
\end{equation}
on the $E_6$-type $\BaCr_5$, we consider the tensor product of abstract crystals
\begin{equation}
\KaEm_{\mathbf{p}_5}
 = \CB_{i_{16}} \ot \cdots \ot \CB_{i_2} \ot \CB_{i_1}.
\end{equation}
For example, given the source-to-sink directed path on~$\BaCr_5$
\begin{equation}
\mathbf{u} = \big(
\bar{5}
\xleftarrow{\,5}\;
\xleftarrow{\,4}\;
\xleftarrow{\,3}\;
\xleftarrow{\,2}\;
\xleftarrow{\,1}\;
\xleftarrow{\,6}\;
\xleftarrow{\,3}\;
\xleftarrow{\,2}\;
\xleftarrow{\,4}\;
\xleftarrow{\,3}\;
\xleftarrow{\,6}\;
\xleftarrow{\,5}\;
\xleftarrow{\,4}\;
\xleftarrow{\,3}\;
\xleftarrow{\,2}\;
\xleftarrow{\,1}
1\big),
\end{equation}
we set
\begin{equation}\label{eq:explicitBt3}
\KaEm_{\mathbf{u}}
=
\begin{aligned}
&\CB_5\ot\CB_4\ot\CB_3\ot\CB_2
 \ot\CB_1\ot\CB_6\ot\CB_3\ot\CB_2\\
&\ot\CB_4\ot\CB_3\ot\CB_6\ot\CB_5
 \ot\CB_4\ot\CB_3\ot\CB_2\ot\CB_1,
\end{aligned}
\end{equation}
and for the source-to-sink directed path
\begin{equation}
\mathbf{v} = \big(
\bar{5}
\xleftarrow{\,5}\;
\xleftarrow{\,4}\;
\xleftarrow{\,3}\;
\xleftarrow{\,6}\;
\xleftarrow{\,2}\;
\xleftarrow{\,3}\;
\xleftarrow{\,4}\;
\xleftarrow{\,5}\;
\xleftarrow{\,1}\;
\xleftarrow{\,2}\;
\xleftarrow{\,3}\;
\xleftarrow{\,4}\;
\xleftarrow{\,6}\;
\xleftarrow{\,3}\;
\xleftarrow{\,2}\;
\xleftarrow{\,1}
1\big),
\end{equation}
we take
\begin{equation}\label{eq:explicitBt4}
\KaEm_{\mathbf{v}}
=
\begin{aligned}
&\CB_5\ot\CB_4\ot\CB_3\ot\CB_6
 \ot\CB_2\ot\CB_3\ot\CB_4\ot\CB_5\\
&\ot\CB_1\ot\CB_2\ot\CB_3\ot\CB_4
 \ot\CB_6\ot\CB_3\ot\CB_2\ot\CB_1.
\end{aligned}
\end{equation}

Now, notice that for every commutative box
\begin{center}
\includegraphics{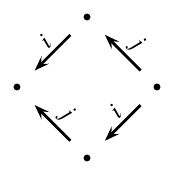}
\end{center}
that appears within~$\BaCr_5$, the nodes for the colors~$i$ and~$j$ are not connected in our Dynkin diagram for type~$E_6$, so that we have a trivial isomorphism $\CB_i\ot\CB_j \cong \CB_j\ot\CB_i$ of crystals.
In other words, the structure of $\KaEm_{\mathbf{p}_5}$ does not depend on~$\mathbf{p}_5$, as long as it is chosen to be a source-to-sink directed path on~$\BaCr_5$, and, furthermore, the isomorphisms between the different crystals $\BaCr_{\mathbf{p}_5}$ are trivial reordering of the components.
Based on this observation, we can set $\KaEm_{5}$ to the common crystal structure of the~$\KaEm_{\mathbf{p}_5}$'s that correspond to source-to-sink directed paths~$\mathbf{p}_5$ on~$\BaCr_5$.

The above argument can be repeated for any row index~$r$.
Each directed graph~$\BaCr_r$ has a unique source node and a unique sink node.
Each source-to-sink directed path~$\mathbf{p}_r$ on~$\BaCr_r$ defines a crystal~$\KaEm_{\mathbf{p}_r}$.
The crystals $\KaEm_{\mathbf{p}_r}$ defined using different $\mathbf{p}_r$ are trivially isomorphic to each other, and we let
\begin{equation}
\KaEm_{r} = \left(\,
\begin{minipage}{22em}
the common crystal structure of~$\KaEm_{\mathbf{p}_r}$ associated with source-to-sink directed paths~$\mathbf{p}_r$ on~$\BaCr_r$
\end{minipage}\,
\right).
\end{equation}
We can now state that the crystal structure we wish to represent with $\otb_{\bce\in\CupBar} \CB_\bce$ is
\begin{equation}
\KaEm =
  \KaEm_{5} \ot \KaEm_{4} \ot \KaEm_{3} \ot \KaEm_{2} \ot \KaEm_{1}.
\end{equation}

Let us next connect the crystal~$\KaEm$ to the expression $\otb_{\bce\in\CupBar} \CB_\bce$.
Referring to the $E_6$-type directed graph~$\BaCr_5$, as given by the first diagram of Appendix~\ref{app:E6BaCr}, one can form a natural grouping of the arrows based on their colors and arrangements.
For example, there are six $1$-arrows in~$\BaCr_5$, and five of these that are side by side to each other can be grouped together and we can let the single $1$-arrow at the right end form a separate group by itself.
We can similarly see the six $2$-arrows separated into three groups.
The two leftmost $2$-arrows that lie side by side form one group, the three $2$-arrows gathered near the center form another group, and the single $2$-arrow at the right forms a solo-group.
There are four $3$-arrow groups, three $4$-arrow groups, two $5$-arrow groups, and two $6$-arrow groups.
In all, we can partition the set of all arrows appearing in~$\BaCr_5$ into 16 separate groups of arrows.

Recall that the circled elements in the first diagram of Appendix~\ref{app:E6BaCr} are the nodes of~$\BaCrBar_5$.
For each group of arrows, there is precisely one element of~$\BaCrBar_5$ that is positioned at the head of one of the arrows.
For example, $\bar{1}5$ is the only circled node that receives an arrow belonging the aforementioned group of five $1$-arrows, and a circled $\bar{1}2$ is found at the head of the other $1$-arrow solo-group.
Similarly, $\bar{2}6$, $\bar{2}15$, and $\bar{2}3$ are the three elements of~$\BaCrBar_5$ corresponding to the three $2$-arrow groups of sizes two, three, and one, respectively.

We have thus explained that there is a natural partition of the arrows appearing in the directed graph~$\BaCr_5$ and that the groups of arrows are in one-to-one correspondence with elements of~$\BaCrBar_5$.
Now, it is clear that any source-to-sink directed path on~$\BaCr_5$ will pass through each and every group of arrows precisely once.
In fact, this property could have been used to \emph{define} the proper grouping of arrows in a compact manner.
Gathering what we have discussed so far, we can understand the expression $\otb_{\bce\in\BaCrBar_r} \CB_\bce$ as the common crystal structure~$\KaEm_r$ of the~$\KaEm_{\mathbf{p}_r}$'s and understand $\otb_{\bce\in\CupBar} \CB_\bce$ as expressing~$\KaEm$.

The notation $\otb_{\bce\in\CupBar} \CB_\bce$ is such that each $\CB_{\bce}$ always refers to the same $\CB_i$ ($i\in I$), and we can explicitly list them as
\begin{equation}\label{eq:ctoi}
\begin{alignedat}{6}
 \CB_{\bar{5}\phantom{2}} &= \CB_5,\quad
&\CB_{\bar{4}5\phantom{2}} &= \CB_4,\quad
&\CB_{\bar{3}4\phantom{1}} &= \CB_3,\quad
&\CB_{\bar{2}6\phantom{1}} &= \CB_2,\quad
&\CB_{\bar{6}1} &= \CB_6,\quad
&\CB_{\bar{3}16} &= \CB_3,\\
 \CB_{\bar{4}2} &= \CB_4,
&\CB_{\bar{1}5\phantom{2}} &= \CB_1,
&\CB_{\bar{2}15} &= \CB_2,
&\CB_{\bar{3}25} &= \CB_3,
&\CB_{\bar{5}6} &= \CB_5,
&\CB_{\bar{4}56} &= \CB_4,\\
 \CB_{\bar{6}4} &= \CB_6,
&\CB_{\bar{3}46} &= \CB_3,
&\CB_{\bar{2}3\phantom{1}} &= \CB_2,
&\CB_{\bar{1}2\phantom{6}} &= \CB_1,
\end{alignedat}
\end{equation}
for the $E_6$ type.
However, the expression $\otb_{\bce\in\CupBar} \CB_\bce$ is meant to convey more structure or positional order among the tensor components than is afforded by this simple correspondence.
For example, the component $\CB_{\bar{5}6}$ of $\otb_{\bce\in\BaCrBar_5} \CB_\bce$ refers to the 12-th component of~\eqref{eq:explicitBt3} and the 8-th component of~\eqref{eq:explicitBt4}, and these should not be confused with the first components of~\eqref{eq:explicitBt3} and~\eqref{eq:explicitBt4}, even though they also happen to be~$\CB_5$ ($5\in I$).

The whole purpose of developing the expression $\otb_{\bce\in\CupBar} \CB_\bce$, rather than forcing a trivial sequential indexing scheme onto the lowest level tensor product components of~$\KaEm_{\mathbf{p}_5}\ot \cdots \ot \KaEm_{\mathbf{p}_1}$, was to be able to access each component of~$\KaEm$ through the indices $\bce\in\CupBar$ in a natural manner.
We can now express the general element of $\KaEm = \otb_{\bce\in\CupBar} \CB_\bce$ as $\ot_{\bce\in\CupBar} b(-s_{\bce})$ or $\ot_\bce b(-s_{\bce})$, without explicitly writing down all the tensor product components in some clear order.
Note that we are omitting the color subscript $i\in I$ from what should be written in the form~$b_i(-s_{\bce})$.
If required, the subscript may be recovered from the index~$\bce\in\BaCrBar_r$.

With the new indexing scheme, we can finally state the goal of this section.
It is to show that the image of the Kashiwara embedding
\begin{equation}
\Binf \hookrightarrow \Binf \ot \otb_{\bce\in\CupBar} \CB_\bce
\end{equation}
is
\begin{equation}
\Kinf =
\{b_\infty\} \ot
\left\{
\ot_{\bce} b(-s_{\bce})
\,\left\vert\,
\begin{minipage}{17em}
(a) $s_{\bce} \geq 0$ for every $\bce\in\CupBar$;\\
(b) $s_{\bce_1} \leq s_{\bce_2}$ for every $\bce_1, \bce_2\in\BaCrBar_r \subset \CupBar$ such that $\bce_1 \comesfrom \bce_2$.
\end{minipage}
\right.
\right\}.
\end{equation}
This will be done by presenting an explicit injective strict crystal morphism from~$\Tinf$ into $\Binf \ot \otb_{\bce\in\CupBar} \CB_\bce$ whose image is~$\Kinf$.
The claim follows from this, since both $\Kinf$ and the image of the Kashiwara embedding are sub-crystals of the same crystal $\Binf\ot\otb_{\bce\in\CupBar}\CB_\bce$ that are isomorphic to~$\Binf$ and the two share at least one element, namely,
\begin{equation}
b_\infty \ot \big(\ot_{c\in\CupBar} b(0)\big),
\end{equation}
corresponding to the highest weight element~$b_\infty$.

Our claim is that the map $\Tinf \rightarrow \Binf \ot \otb_{\bce\in\CupBar} \CB_\bce$, given by
\begin{equation}\label{eq:mapsto}
\left(\,
\begin{minipage}{15em}
marginally large tableau with reduced set of accumulated box counts $(t_{\bce})_{\bce\in\CupBar}$
\end{minipage}\,
\right)
\mapsto
b_\infty \ot \big(\ot_{\bce\in\CupBar} b(-t_{\bce})\big),
\end{equation}
is a strict crystal morphism that is bijective onto~$\Kinf$.
Note that Proposition~\ref{prop:bijection} already assures us of the bijectivity claim.
All other arguments for a proof of the crystal isomorphism claim being standard, let us just show that this map commutes with the Kashiwara operators.

We will rely on Proposition~\ref{prop:mtpr} and compute the $a_k$'s given there for elements of both~$\Tinf$ and~$\Kinf$.
The simplest case is for the color $i=1$.
The $a_k$ values for the right-hand side element of~\eqref{eq:mapsto} in relation to $\ftil_1$ or $\etil_1$ actions are all $-\infty$, except for the three listed below.
\begin{align}
a_\infty &= 0.\label{eq:tmprhs1}\\
a_{1,\bar{1}5} &= t_{1,\bar{1}5} - t_{1,\bar{2}6}.\label{eq:tmprhs}\\
a_{1,\bar{1}2} &= t_{1,\bar{1}2} - t_{1,\bar{2}6} + 2t_{1,\bar{1}5} - t_{1,\bar{2}15} - t_{1,\bar{2}3}\\
 &= a_{1,\bar{1}5} - (t_{1,\bar{2}15} - t_{1,\bar{1}5}) + (t_{1,\bar{1}2} - t_{1,\bar{2}3}).\label{eq:tmprhs4}
\end{align}
Here, we are using a more natural index in place of the $k$'s, and the three elements from the right-hand side of~\eqref{eq:mapsto} that are reflected here appear in the order
\begin{equation}
b_\infty \ot \cdots \ot b(-t_{1,\bar{1}5}) \ot \cdots \ot b(-t_{1,\bar{1}2}) \ot \cdots,
\end{equation}
regardless of which source-to-sink directed path is chosen to make the tensor product order explicit.
Also note that the right-hand side of~\eqref{eq:tmprhs} is always non-negative.
Hence, according to Proposition~\ref{prop:mtpr}, $\etil_1$ will act on
\begin{equation}
\begin{cases}
b_\infty &\text{if $t_{1,\bar{2}15} - t_{1,\bar{1}5} \geq t_{1,\bar{1}2} - t_{1,\bar{2}3}$ and $t_{1,\bar{1}5} = t_{1,\bar{2}6}$},\\
b(-t_{1,\bar{1}5}) &\text{if $t_{1,\bar{2}15} - t_{1,\bar{1}5} \geq t_{1,\bar{1}2} - t_{1,\bar{2}3}$ and $t_{1,\bar{1}5} > t_{1,\bar{2}6}$},\\
b(-t_{1,\bar{1}2}) &\text{if $t_{1,\bar{2}15} - t_{1,\bar{1}5} < t_{1,\bar{1}2} - t_{1,\bar{2}3}$},
\end{cases}
\end{equation}
with the $\etil_1$ action on $b_\infty$ being zero and the other two cases reducing either $t_{1,\bar{1}5}$ or $t_{1,\bar{1}2}$ by one.
Similarly, $\ftil_1$ will act on
\begin{equation}
\begin{cases}
b(-t_{1,\bar{1}5}) &\text{if $t_{1,\bar{2}15} - t_{1,\bar{1}5} > t_{1,\bar{1}2} - t_{1,\bar{2}3}$},\\
b(-t_{1,\bar{1}2}) &\text{if $t_{1,\bar{2}15} - t_{1,\bar{1}5} \leq t_{1,\bar{1}2} - t_{1,\bar{2}3}$},
\end{cases}
\end{equation}
with the $\ftil_1$ action increasing either $t_{1,\bar{1}5}$ or $t_{1,\bar{1}2}$ by one.

The $a_k$'s appearing in Proposition~\ref{prop:mtpr} are less straightforward to write down for a tableau, because the number of its tensor product components is not fixed and there can even be multiple instances of the same crystal element.
To treat this situation, we write the marginally large tableau as a tensor product of its boxes through the middle eastern reading and add parenthesis at appropriate places to view the long tensor product as a tensor product of a small number of grouped components.
For the $\etil_1$ and $\ftil_1$ actions, the grouping should be done as follows.
\begin{enumerate}
\item $g_1$: tensor product of all boxes in the top row of the tableau containing $\bce\in\BaCr_5$ such that $\bar{5} \comesfrom \bce \comesfrom \bar{2}6$.
\item $g_2$: tensor product of all boxes in the top row of the tableau containing $\bce\in\BaCr_5$ such that $\bar{1}\bar{6}2 \comesfrom \bce \comesfrom \bar{1}5$.
\item $g_3$: tensor product of all boxes in the top row of the tableau containing $\bce\in\BaCr_5$ such that $\bar{6}1 \comesfrom \bce \comesfrom \bar{2}15$.
\item $g_4$: tensor product of all boxes in the top row of the tableau containing $\bce\in\BaCr_5$ such that $\bar{4}2 \comesfrom \bce \comesfrom \bar{2}3$.
\item $g_5$: tensor product of all boxes in the top row of the tableau containing~$\bar{1}2\in\BaCr_5$.
\item $g_6$: tensor product of all boxes in the top row of the tableau containing~$1\in\BaCr_5$.
\item $g_7$: tensor product of all boxes in the forth through first rows of the tableau.
\end{enumerate}
Then, the tensor product form of the given marginally large tableau is equal to
\begin{equation}
g_1 \ot g_2 \ot \dots \ot g_7.
\end{equation}

The $a_k$ values for this view of the tableau in relation to $\etil_1$ and $\ftil_1$ actions are as follows.
\begin{align}
a_1 &= 0\\
a_2 &= t_{1,\bar{1}5} - t_{1,\bar{2}6}\label{eq:sr34fa}\\
a_3 &= a_2\\
a_4 &= a_3 - (t_{1,\bar{2}15} - t_{1,\bar{1}5})\\
a_5 &= a_4 + (t_{1,\bar{1}2} - t_{1,\bar{2}3})\\
a_6 &= a_5\\
a_7 &= a_6 - \big(\nbxr{1}{1} - \nbxr{\bar{1}2}{2}\big)
\end{align}
Note that the right-hand side of~\eqref{eq:sr34fa} is non-negative because $\bar{2}6 \comesfrom \bar{1}5$.
Also note that~$a_7$ is strictly less than~$a_6$, because a large tableau has more boxes containing $1\in\BaCr_5$ on the fifth row than even the total number boxes on its fourth row.

Hence, according to Proposition~\ref{prop:mtpr}, $\etil_1$ will act on
\begin{equation}
\begin{cases}
g_1 &\text{if $t_{1,\bar{2}15} - t_{1,\bar{1}5} \geq t_{1,\bar{1}2} - t_{1,\bar{2}3}$ and $t_{1,\bar{1}5} = t_{1,\bar{2}6}$},\\
g_2 &\text{if $t_{1,\bar{2}15} - t_{1,\bar{1}5} \geq t_{1,\bar{1}2} - t_{1,\bar{2}3}$ and $t_{1,\bar{1}5} > t_{1,\bar{2}6}$},\\
g_5 &\text{if $t_{1,\bar{2}15} - t_{1,\bar{1}5} < t_{1,\bar{1}2} - t_{1,\bar{2}3}$}.
\end{cases}
\end{equation}
The $\etil_1$ action on $g_1$ results in zero.
The $\etil_1$ action on $g_2$ reduces $t_{1,\bar{1}5}$ by one and has no effect on other $t_{r,\bce}$'s, regardless of which tensor product component within~$g_2$ the action falls on.
The $\etil_1$ action on $g_5$ reduces $t_{1,\bar{1}2}$ by one and affects no other $t_{r,\bce}$'s.

Similarly, $\ftil_1$ will act on
\begin{equation}
\begin{cases}
g_3 &\text{if $t_{1,\bar{2}15} - t_{1,\bar{1}5} > t_{1,\bar{1}2} - t_{1,\bar{2}3}$},\\
g_6 &\text{if $t_{1,\bar{2}15} - t_{1,\bar{1}5} \leq t_{1,\bar{1}2} - t_{1,\bar{2}3}$}.
\end{cases}
\end{equation}
The $\ftil_1$ action on $g_3$ will increasing $t_{1,\bar{1}5}$ by one and leave other $t_{r,\bce}$'s unchanged.
The $\ftil_1$ action on $g_6$ will fall on the right-most $1$-box in the top row of~$T$, change it into a $\bar{1}2$-box, and call for an insertion of a new $1$-box.
This will increase $t_{1,\bar{1}2}$ by one and have no effect on other~$t_{r,\bce}$'s.

Comparing the descriptions of $\etil_1$ and $\ftil_1$ on $\Tinf$ and $\Kinf$, we can conclude that the mapping~\eqref{eq:mapsto} commutes with these two Kashiwara operators.
Checking the compatibility of other Kashiwara operator actions with the mapping~\eqref{eq:mapsto} can be done in a case by case manner.

We have thus arrived at our main result.
\begin{theorem}\label{thm:iso}
The map $\Tinf \rightarrow \Kinf \subset \Binf \ot \otb_{\bce\in\CupBar} \CB_\bce$ given by
\begin{equation*}
\left(\,
\begin{minipage}{17em}
marginally large tableau with reduced set of accumulated box counts $(t_{\bce})_{\bce\in\CupBar}$
\end{minipage}\,
\right)
\mapsto
b_\infty \ot \big(\ot_{\bce\in\CupBar} b(-t_{\bce})\big)
\end{equation*}
is an isomorphism of crystals.
\end{theorem}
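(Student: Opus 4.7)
The bijectivity of the map onto $\Kinf$ is already supplied by Proposition~\ref{prop:bijection}, so the plan is to verify that the map is a strict crystal morphism. Weight preservation is immediate from the formula $\wt(b(-t_\bce)) = -t_\bce \alpha_{i(\bce)}$, where $i(\bce) \in I$ is the color such that $\CB_\bce = \CB_{i(\bce)}$, since summing these weights against the tallies of tableau contents recovers $\wt(T)$. What remains, and where essentially all the work lies, is the commutativity with the Kashiwara operators $\etil_i$ and $\ftil_i$ for every $i\in I$. The paper has already carried out this verification for $i=1$; the proposal is to mimic that argument for each remaining color.

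For a generic color $i$, the image-side computation proceeds as follows. Within $\otb_{\bce\in\CupBar}\CB_\bce$, only the tensor components $\CB_\bce$ with $\CB_\bce = \CB_i$ contribute non-trivial $\veps_i$ and $\vphi_i$; from~\eqref{eq:ctoi} and its $E_7$ analogue, these are the $\bce\in\CupBar$ at which an $i$-arrow terminates. List these $\bce$'s in the order in which they appear in a fixed source-to-sink directed path of each $\BaCr_r$ (any choice yields the same answer because the non-adjacent Dynkin colors between commuting diamonds commute as tensor factors). Apply Proposition~\ref{prop:mtpr} to compute the $a_k$'s for the image side: each $a_k$ reads as $-t_\bce$ plus a telescoping sum of weight pairings from the preceding tensor factors, which, after cancellation using $\langle h_i, \wt(\bce')\rangle$, simplifies to a signed combination of the $t_{\bce''}$ at those $\bce''$ lying between consecutive $i$-sinks on the path.

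For the tableau side, I would, for each row $r$ where $\BaCrBar_r$ meets the color $i$, partition the boxes in that row (read from left to right) into contiguous groups according to the $i$-arrow grouping of $\BaCr_r$ discussed in the paper. Pre-pending a group containing all strictly higher rows and appending a group containing all strictly lower rows, then applying Proposition~\ref{prop:mtpr} to the middle eastern reading, yields explicit $a_k$'s expressed in terms of the $\nbxr{\bce}{r}$'s, which rewrite to exactly the same signed combinations of $t_{r,\bce}$'s as on the image side. The max-index selection rule in Proposition~\ref{prop:mtpr} then delivers identical verdicts about which group (equivalently, which $\CB_\bce$) gets acted on, and in each case the effect on the $t$-tuple is to add or subtract $1$ in the single coordinate indexed by the corresponding $\bce \in \CupBar$, in perfect agreement with the description of $\etil_i, \ftil_i$ on $\Kinf$ supplied by Proposition~\ref{prop:mtpr}.

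The main obstacle is bookkeeping rather than conceptual: a given color $i$ may correspond to several $\bce$'s spread across several rows, so one must carefully track which row an action falls into. Two technical points must be confirmed in each case. First, strict inequalities forced by the large/marginally large conditions (e.g., $\nbxr{1}{1} > \nbxr{\bar{1}2}{2}$ and its analogues) must guarantee that the ``lower row'' group is never selected by the max-rule, so that an $\etil_i$-action never decreases the source-node count below the marginally-large threshold; second, when $\ftil_i$ acts on the right-most $i$-ancestor box in a row and converts it, a column of source nodes may need to be inserted on the affected row to restore marginal largeness, and this insertion must precisely correspond to the single coordinate bump on the image side. Both facts follow by direct inspection of the graphs $\BaCr_r$ in the appendix, but they must be checked for each $i \in I$ and each row where $\CB_\bce = \CB_i$ occurs. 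Once these routine case checks are complete, the $E_6$ statement follows, and the $E_7$ version is obtained by the same method using the $\BaCr_6$ and $\BaCr_5$ graphs of Appendix~\ref{app:E7BaCr}.
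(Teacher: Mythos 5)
Your proposal is correct and follows essentially the same route as the paper: bijectivity from Proposition~\ref{prop:bijection}, then a verification that the map commutes with $\etil_i$ and $\ftil_i$ by computing the $a_k$'s of Proposition~\ref{prop:mtpr} on both the $\Kinf$ side and the tableau side (via the arrow-grouping of each $\BaCr_r$ and the middle eastern reading) and matching the resulting selection rules. The paper likewise carries this out explicitly only for $i=1$ and leaves the remaining colors to an analogous case-by-case check, so your outline adds no new ideas but also omits nothing the paper supplies.
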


As was discussed before, the image $\Kinf$ of the above crystal isomorphism is the image of a Kashiwara embedding.

\begin{corollary}\label{cor:KashImbIm}
The image of the Kashiwara embedding
\begin{equation*}
\Binf \hookrightarrow \Binf \ot \otb_{\bce\in\CupBar} \CB_\bce
\end{equation*}
is
\begin{equation*}
\{b_\infty\} \ot
\left\{
\ot_{\bce} b(-s_{\bce})
\,\left\vert\,
\begin{minipage}{17em}
\textup{(}a\textup{)}~$s_{\bce} \geq 0$ for every $\bce\in\CupBar$;\\
\textup{(}b\textup{)}~$s_{\bce_1} \leq s_{\bce_2}$ for every $\bce_1, \bce_2\in\BaCrBar_r \subset \CupBar$ such that $\bce_1 \comesfrom \bce_2$.
\end{minipage}
\right.
\right\}.
\end{equation*}
\end{corollary}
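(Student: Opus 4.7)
The plan is to deduce the corollary directly from Theorem~\ref{thm:iso}, using the standard fact that a connected sub-crystal of a crystal is determined by any single element. Both $\Kinf$ and the image of the Kashiwara embedding live as sub-crystals inside the ambient crystal $\Binf \otimes \otb_{\bce\in\CupBar} \CB_\bce$, so it suffices to show (i) each of them is isomorphic to $\Binf$ and (ii) they share at least one common element.

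For step (i), the image of the Kashiwara embedding is isomorphic to $\Binf$ by construction, since the embedding is injective and strict. For $\Kinf$, Theorem~\ref{thm:iso} exhibits a crystal isomorphism $\Tinf \to \Kinf$, and Proposition~\ref{prop:Tinf} gives $\Tinf \cong \Binf$; composing these yields $\Kinf \cong \Binf$ as crystals. For step (ii), observe that the marginally large tableau consisting solely of source-node boxes on each row (the highest weight tableau, whose reduced accumulated box counts are all zero) maps under the isomorphism of Theorem~\ref{thm:iso} to
\begin{equation*}
b_\infty \otimes \bigl(\otimes_{\bce\in\CupBar} b(0)\bigr),
\end{equation*}
which is clearly an element of $\Kinf$. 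On the other hand, this same element is the image of the highest weight element $b_\infty \in \Binf$ under the Kashiwara embedding, by the defining property $b_\infty \mapsto b_\infty \otimes b_{i_m}(0) \otimes \cdots \otimes b_{i_1}(0)$ recalled in Section~\ref{sec:not}.

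Finally, since $\Binf$ is generated from its highest weight element $b_\infty$ by successive applications of the Kashiwara operators $\ftil_i$, any sub-crystal of $\Binf\otimes \otb_{\bce\in\CupBar}\CB_\bce$ that is isomorphic to $\Binf$ is uniquely determined by the image of~$b_\infty$: it consists of all elements obtainable from that image by applying $\ftil_i$'s. Since $\Kinf$ and the image of the Kashiwara embedding agree on the image of $b_\infty$ and both are isomorphic copies of $\Binf$, they must coincide as subsets. The main (mild) obstacle is simply to verify that the element $b_\infty \otimes (\otimes_{\bce} b(0))$ does indeed sit inside $\Kinf$ as defined, which is immediate from the constraints $s_{\bce}\geq 0$ and $s_{\bce_1}\leq s_{\bce_2}$ being trivially satisfied when all $s_\bce = 0$.
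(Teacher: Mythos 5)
Your proposal is correct and follows essentially the same route as the paper: the authors likewise observe that $\Kinf$ and the image of the Kashiwara embedding are both sub-crystals of $\Binf \ot \otb_{\bce\in\CupBar} \CB_\bce$ isomorphic to $\Binf$ and that they share the element $b_\infty \ot \big(\ot_{\bce\in\CupBar} b(0)\big)$, hence coincide. Your added remark that a sub-crystal isomorphic to $\Binf$ is generated from the image of $b_\infty$ by the $\ftil_i$'s is exactly the implicit justification the paper relies on.
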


\section{Description of $\Binf$ through Kashiwara Embedding for Type-$E_7$}\label{sec:KE.E7}

All the claims of Section~\ref{sec:KE.E6} were carefully written so that each remains valid for the $E_7$ type.
In particular, the final results Theorem~\ref{thm:iso} and Corollary~\ref{cor:KashImbIm} hold true for the $E_7$ type.

Most of the proofs given in Section~\ref{sec:KE.E6} also carry over to the $E_7$ type with no change.
Below, we provide brief comments for those arguments that require a little bit of extra explanations.

\paragraph{Lemma~\ref{lem:tmin}}

The directed graph $\BaCr_6$ for the $E_7$ type contains one node that receives three incoming arrows.
We clarify that Lemma~\ref{lem:tmin} still applies to any two of the three arrows at such a node.
The proof of Lemma~\ref{lem:tmin} remains valid for the~$E_7$ type, word for word, although the checking of $G_1\cap G_2 = G_0$ requires a 3-dimensional view of the directed graph.

Note that, if $\bce_0 \leftarrow \bce_1$, $\bce_0 \leftarrow \bce_2$, and $\bce_0 \leftarrow \bce_3$, then Lemma~\ref{lem:tmin} implies the weaker statement $t_{\bce_0} = \min(t_{\bce_1}, t_{\bce_2}, t_{\bce_3})$.

\paragraph{Lemma~\ref{lem:capmin}}

During the proof, we located a certain \emph{rectangle} within~$\BaCr_r$.
This may be trivial to do for the $E_6$ type, but is not always so with the $E_7$-type $\BaCr_6$.

Let us first illustrate two $E_7$-type examples, whose rectangles are less straightforward to locate.
The reader should trace these out from the diagrams of Appendix~\ref{app:E7BaCr}.
The nodes $\bar{2}67$ and $\bar{6}1$ cannot be placed on a directed path of~$\BaCr_6$ for type $E_7$, and the (bent) rectangle for this pair would be as follows.
\begin{center}
\includegraphics{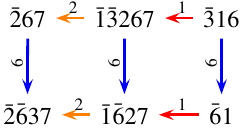}
\end{center}
For the $\bar{1}6$-$\bar{6}1$ node pair, either of the following two rectangles could be used in the proof.
\begin{center}
\includegraphics{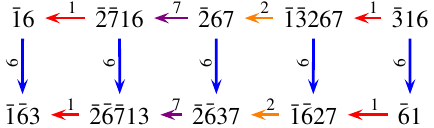}
\qquad\qquad
\includegraphics{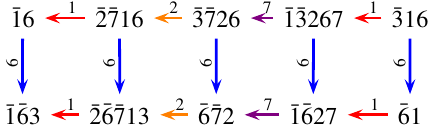}
\end{center}

There are two cases with the $E_7$ type $C_6$ for which our rectangle argument is invalid.
These are the $\bar{2}67$-$\bar{6}\bar{7}2$ pair and the $\bar{3}\bar{7}26$-$\bar{2}\bar{6}37$ pair cases.
However, for both of these cases, we can visually check through the directed graph of Appendix~\ref{app:E7BaCr} that
\begin{equation}
G_1\cap G_2 = \{x\in\BaCrCirc_r\mid x\comesfrom \bar{2}\bar{6}\bar{7}13 \},
\end{equation}
and we can also directly check that
\begin{alignat}{3}
s_{\bar{2}\bar{6}\bar{7}13}
&= \min(s_{\bar{2}\bar{7}16}, s_{\bar{6}\bar{7}2})&
&= \min(s_{\bar{2}67}, s_{\bar{3}\bar{7}26}, s_{\bar{6}\bar{7}2})&
&= \min(s_{\bar{2}67}, s_{\bar{6}\bar{7}2}),\\
s_{\bar{2}\bar{6}\bar{7}13}
&= \min(s_{\bar{6}\bar{7}2}, s_{\bar{2}\bar{6}37})&
&= \min(s_{\bar{3}\bar{7}26}, s_{\bar{1}\bar{6}27}, s_{\bar{2}\bar{6}37})&
&= \min(s_{\bar{3}\bar{7}26}, s_{\bar{2}\bar{6}37}).
\end{alignat}
Hence, Lemma~\ref{lem:capmin} remains valid even for these two exceptional cases.

\paragraph{Lemma~\ref{lem:ABCsurj}}

During the proof, to show that each $n_\bce$ is non-negative, we discussed the cases of~$\bce$ having out-degrees 1 and~2.
The $E_7$-type directed graph $\BaCr_6$ contains one node of out-degree 3.
The argument we gave for the out-degree 2 case applies to any two of the arrows from the out-degree 3 node and this is sufficient to show that $n_\bce$ is non-negative.

\paragraph{Proposition~\ref{prop:bijection}}

Unlike the $\BaCr_r$'s for the $E_6$ type whose nodes are always of in-degree 0, 1, or 2, the directed graph $\BaCr_6$ for the $E_7$ type contains a single node of in-degree 3, namely, the node~$\bar{2}\bar{6}\bar{7}13$.
Hence, we must check if the inductive extension of~$\bar{\mathbf{s}}$ to~$\mathbf{s}$ done in the proof did not call for assignments of contradicting values to~$s_{\bar{2}\bar{6}\bar{7}13}$.
However, it is easy to see that all three possibilities
\begin{align}
\min(s_{\bar{2}\bar{7}16}, s_{\bar{2}\bar{6}37})
 &= \min\big(
        \min(s_{\bar{2}67}, s_{\bar{3}\bar{7}26}),
        \min(s_{\bar{2}67}, s_{\bar{1}\bar{6}27})
        \big),\\
 &= \min(s_{\bar{2}67}, s_{\bar{3}\bar{7}26}, s_{\bar{1}\bar{6}27})\\
\min(s_{\bar{6}\bar{7}2}, s_{\bar{2}\bar{7}16})
 &= \min\big(
        \min(s_{\bar{3}\bar{7}26}, s_{\bar{1}\bar{6}27}),
        \min(s_{\bar{2}67}, s_{\bar{3}\bar{7}26})
        \big),\\
 &= \min(s_{\bar{3}\bar{7}26}, s_{\bar{1}\bar{6}17}, s_{\bar{2}67})\\
\min(s_{\bar{2}\bar{6}37}, s_{\bar{6}\bar{7}2})
 &= \min\big(
        \min(s_{\bar{2}67}, s_{\bar{1}\bar{6}27}),
        \min(s_{\bar{3}\bar{7}26}, s_{\bar{1}\bar{6}27})
        \big)\\
 &= \min(s_{\bar{2}67}, s_{\bar{1}\bar{6}27}, s_{\bar{3}\bar{7}26})
\end{align}
for the definition of~$s_{\bar{2}\bar{6}\bar{7}13}$ lead to the same value.

The fact that these three values are the same further implies that two of the three values $s_{\bar{2}\bar{7}16}$, $s_{\bar{2}\bar{6}37}$, and $s_{\bar{6}\bar{7}2}$ are the same and that the above common minimum value is also equal to
\begin{equation}
\min(s_{\bar{2}\bar{7}16}, s_{\bar{2}\bar{6}37}, s_{\bar{6}\bar{7}2}).
\end{equation}
This observation allows us to adjust the final part of the proof to hold for the $E_7$ type.

\appendix
\section{Basic Crystals}\label{app:BaCr}

The two basic crystals that were used extensively in this paper are given here explicitly.
These were taken from~\cite{JonSch2010,HonLee12}.
The differences in labeling of elements and arrows are due to the difference in Dynkin diagram labeling, and the rearrangement of elements were done to make the concepts used in our paper more visible.

\subsection{Basic Crystal for $E_6$ type}
\label{app:E6BaCr}

The basic crystal $\BaCr = \CB(\La_1)$ for the $E_6$-type is as follows.
\begin{center}
\includegraphics{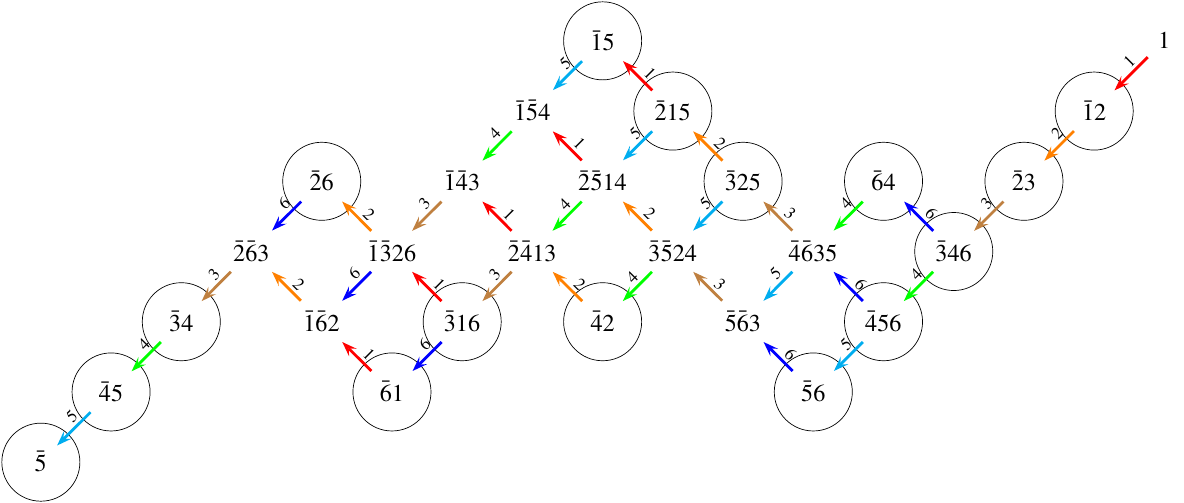}
\end{center}
This is also~$\BaCr_5$, the set of elements that may appear on the fifth (or the top) row of a type-$E_6$ large tableau.
The circled nodes are the crystal elements that receive a single incoming arrow and these form the subset~$\BaCrBar_5$.

The set $\BaCr_4$ of basic crystal elements that may appear on the fourth row of a type-$E_6$ marginally tableau is as follows.
\begin{center}
\includegraphics{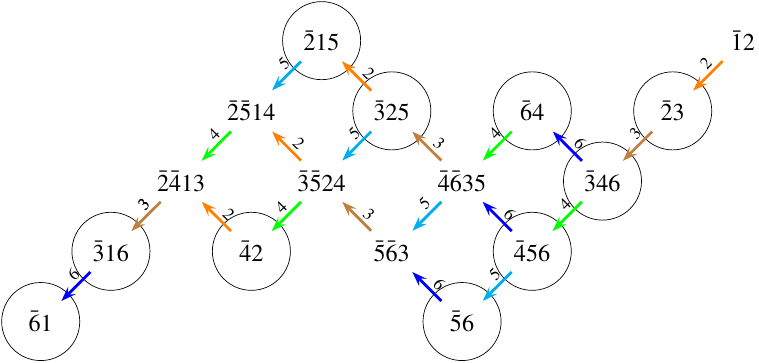}
\end{center}
This directed graph is the largest component from what remains when all the $1$-arrows are removed from the basic crystal~$\BaCr$.
The circled nodes form the smaller set $\BaCrBar_4$ of crystal elements that receive a single incoming arrow.

The directed graphs $\BaCr_{3}$, $\BaCr_{2}$, and $\BaCr_{1}$ are as follows.
\begin{center}
\includegraphics{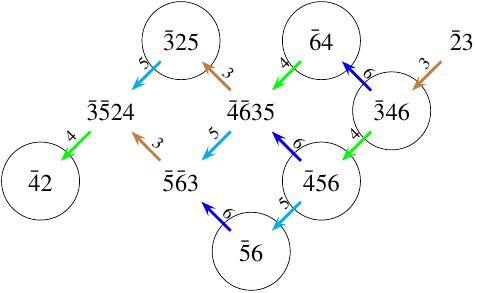}\qquad
\includegraphics{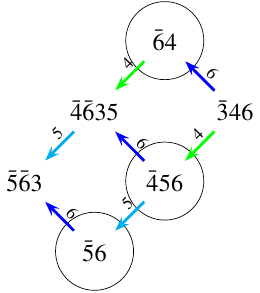}\qquad
\includegraphics{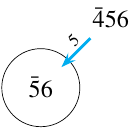}
\end{center}
The circled nodes from  these directed graphs form $\BaCrBar_3$, $\BaCrBar_2$, and~$\BaCrBar_1$.

\subsection{Basic Crystal for $E_7$ type}\label{app:E7BaCr}

The basic crystal $\BaCr = \CB(\La_7)$ for the $E_7$-type is as follows.
\begin{center}
\includegraphics{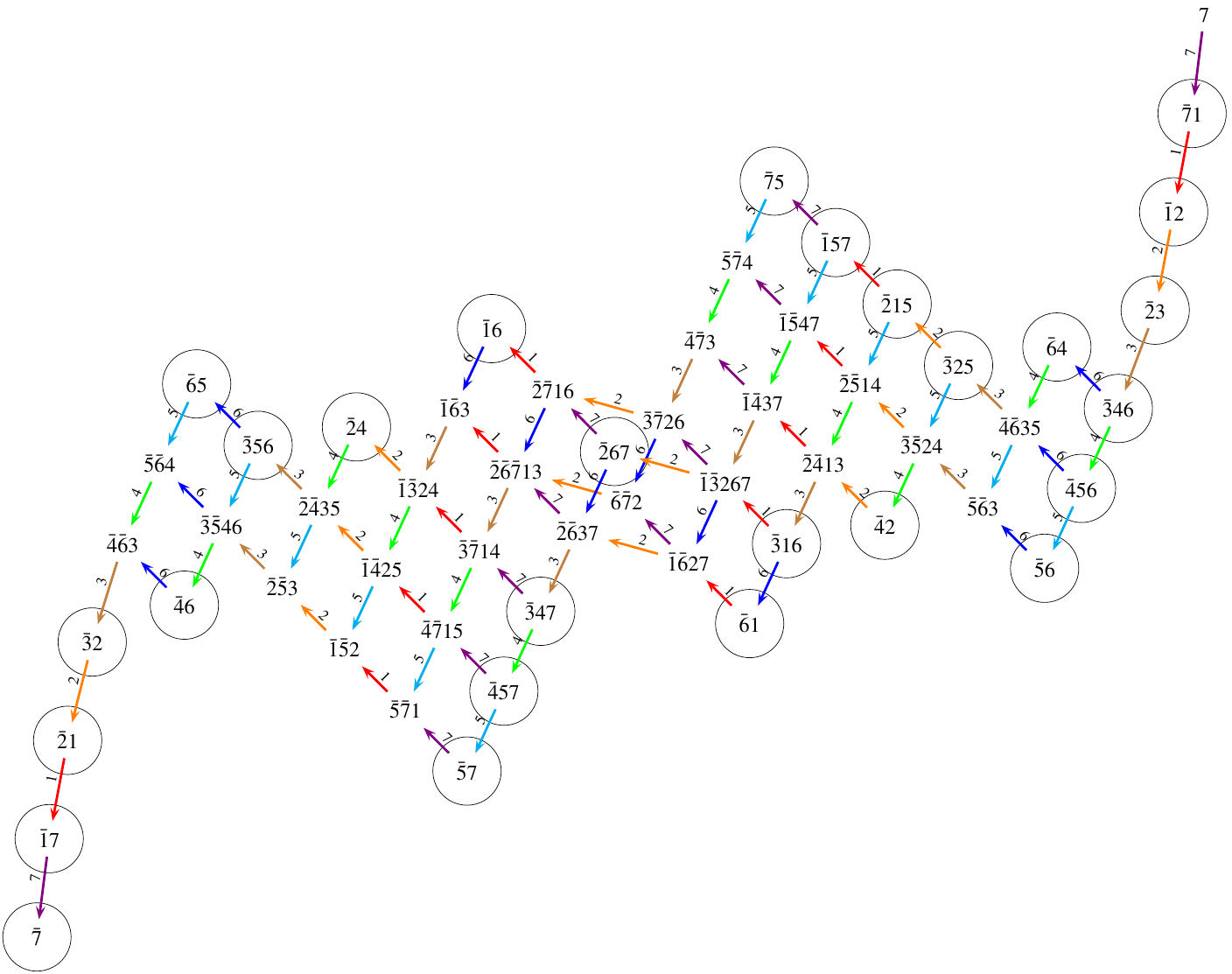}
\end{center}
This is also~$\BaCr_6$, the set of elements that may appear on the sixth (or the top) row of a type-$E_7$ large tableau.
The circled nodes form the subset~$\BaCrBar_6$.

The directed graph $\BaCr_5$ is as follows.
\begin{center}
\includegraphics{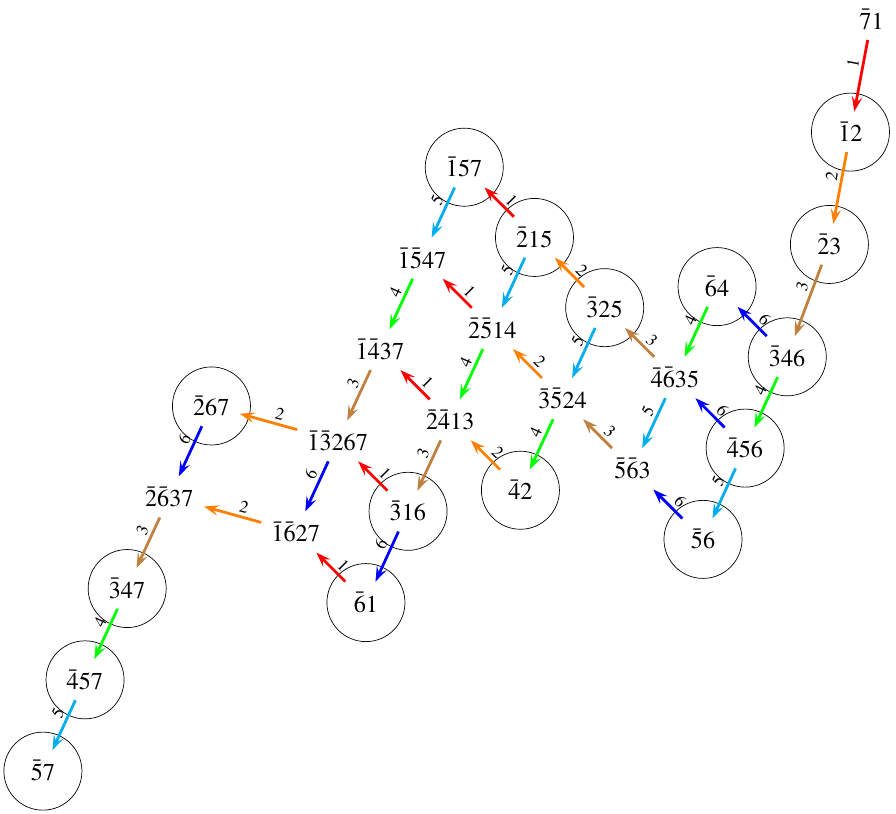}
\end{center}

The directed graphs $\BaCr_4$, $\BaCr_3$, $\BaCr_2$, and $\BaCr_1$ for the $E_7$ type are all identical to those for the $E_6$ type given in Appendix~\ref{app:E6BaCr}.



\end{document}